\newcommand{\Ric}{\text{Ric}}
\newcommand{\Vol}{\text{Vol}}
\newcommand{\diam}{\text{diam}}
\newcommand{\CC}{\mathds{C}}
\newcommand{\RR}{\mathds{R}}
\newcommand{\Sn}{\mathds{S}}
\newcommand{\cC}{\mathcal{C}}
\newcommand{\cD}{\mathcal{D}}
\newcommand{\cF}{\mathcal{F}}
\newcommand{\cB}{\mathcal{B}}
\newcommand{\cM}{\mathcal{M}}
\begin{document}

\newtheorem{theorem}{Theorem}[section]

\newtheorem{proposition}{Proposition}[section]

\newtheorem{lemma}{Lemma}[section]

\newtheorem{corollary}{Corollary}[section]

\newtheorem{conjecture}{Conjecture}[section]

\theoremstyle{definition}
\newtheorem{definition}{Definition}[section]

\theoremstyle{remark}
\newtheorem{remark}{Remark}[section]

\theoremstyle{remark}
\newtheorem{example}{Example}[section]

\theoremstyle{remark}
\newtheorem{note}{Note}[section]

\theoremstyle{remark}
\newtheorem{question}{Question}[section]

\newenvironment{myindentpar}[1]%
{\begin{list}{}%
         {\setlength{\leftmargin}{#1}}%
         \item[]%
}
{\end{list}}

\title{Characterization of Tangent Cones of Noncollapsed Limits with Lower Ricci Bounds and Applications}

\author{Tobias Holck Colding and Aaron Naber\thanks{Department of Mathematics, Massachusetts Institute of Technology, Cambridge, MA 02139.  Emails: colding@math.mit.edu and anaber@math.mit.edu.
         The  first author
was partially supported by NSF Grants DMS  0606629, DMS 1104392,  and NSF FRG grant DMS
 0854774 and the second author by an NSF Postdoctoral Fellowship.}}


\date{\today}
\maketitle
\begin{abstract}
Consider a limit space  $(M_\alpha,g_\alpha,p_\alpha)\stackrel{GH}{\rightarrow} (Y,d_Y,p)$, where the $M_\alpha^n$ have a lower Ricci curvature bound and are volume noncollapsed.  The tangent cones of $Y$ at a point $p\in Y$ are known to be metric cones $C(X)$, however they need not be unique.  Let $\overline\Omega_{Y,p}\subseteq\cM_{GH}$ be the closed subset of compact metric spaces $X$ which arise as cross sections for the tangents cones of $Y$ at $p$.  In this paper we study the properties of $\overline\Omega_{Y,p}$.  In particular, we give necessary and sufficient conditions for an open smooth family $\Omega\equiv (X,g_s)$ of closed manifolds to satisfy $\overline\Omega =\overline\Omega_{Y,p}$ for {\it some} limit $Y$ and point $p\in Y$ as above, where $\overline\Omega$ is the closure of $\Omega$ in the set of metric spaces equipped with the Gromov-Hausdorff topology.  We use this characterization to construct examples which exhibit fundamentally new behaviors.  The first application is to construct limit spaces $(Y^n,d_Y,p)$ with $n\geq 3$ such that at $p$ there exists for every $0\leq k\leq n-2$ a tangent cone at $p$ of the form $\RR^{k}\times C(X^{n-k-1})$, where $X^{n-k-1}$ is a smooth manifold not isometric to the standard sphere.  In particular, this is the first example which shows that a stratification of a limit space $Y$ based on the Euclidean behavior of tangent cones is not possible or even well defined.  It is also the first example of a three dimensional limit space with nonunique tangent cones.  The second application is to construct a limit space $(Y^5,d_Y,p)$, such that at $p$ the tangent cones are not only not unique, but not homeomorphic.  Specifically, some tangent cones are homeomorphic to cones over $\CC P^2\sharp\overline{\CC P}^2$ while others are homeomorphic to cones over $\Sn^4$.
\end{abstract}

\section{Introduction}

In this paper we are interested in pointed Gromov-Hausdorff limits $(M_\alpha,g_\alpha,p_\alpha)\stackrel{GH}{\rightarrow} (Y,d_Y,p)$ such that the $M_\alpha$'s are $n$-dimensional and satisfy the lower Ricci bound
\begin{align}\label{con:Nonnegative_Ricci}
 \Ric(M_\alpha)\geq -(n-1)g\, ,
\end{align}
and the noncollapsing assumption
\begin{align}\label{con:noncollapsed}
 \Vol(B_1(p_\alpha))\geq v>0\, .
\end{align}

For any such limit $Y$, by Gromov's compactness theorem \cite{GLP,G}, any sequence $r_i\rightarrow 0$ contains a subsequence $r_j$ such that $(Y,r_j^{-1}d_Y,p)\stackrel{GH}{\rightarrow}(Y_p,d,p)$, where $Y_p$ is a length space.  Any such limit $Y_p$ is said to be a tangent cone of $Y$ at $p$.  By the noncollapsing assumption (\ref{con:noncollapsed}) it follows from \cite{ChC1}, \cite{ChC2} that any tangent cone must be a metric cone $Y_p\equiv C(X_p)$ over a compact metric space $X_p$ with $\diam\, X_p\leq \pi$ and Hausdorff dimension equal to $n-1$\footnote{Without the noncollapsing assumption tangent cones need not be metric cones by \cite{ChC2} and need not even be polar spaces by \cite{M4}.}.  However, by \cite{ChC2} tangent cones of $Y$ at $p$ need not be unique; cf. \cite{P2}.  More precisely, it may happen that there is a different sequence $\tilde r_j\rightarrow 0$ such that $(Y,\tilde r_j^{-1}d_Y,p)\stackrel{GH}{\rightarrow}(C(\tilde X_p),d,p)$ converges to a tangent cone $C(\tilde X_p)$ where $\tilde X_p$ and $X_p$ are not isometric.  We are therefore justified in defining for $p\in Y$ the family $\overline\Omega_{Y,p}\equiv \{X_s\}$ of metric spaces such that $C(X_s)$ arises as a tangent cone of $Y$ at $p$.

It is known that the family $\overline\Omega_{Y,p}\subseteq \cM_{GH}$, viewed as a subset of the space of all compact metric spaces endowed with the Gromov-Hausdorff topology, is compact and path connected.  It follows from \cite{ChC2} that the volume $\Vol (\cdot)$, or more precisely the $(n-1)$-dimensional Hausdorff measure, is independent of the cross section $X_s\in\overline\Omega_{Y,p}$ and is bounded from above by that of the round unit sphere of dimension $n-1$.  That is, 
\begin{align}\label{con:Volume_Constant}
 \Vol(X_s) = V\leq \Vol(S^{n-1}(1))\, .
\end{align}
Further, if $X_s\in\overline\Omega_{Y,p}$ is a smooth cross section, e.g. a smooth closed manifold, then because $\Ric(C(X_s))\geq 0$ we have that 
\begin{align}\label{con:Positive_Ricci}
 \Ric(X_s)\geq n-2\, .
\end{align}
In fact, it is fairly clear that (\ref{con:Positive_Ricci}) holds in the more general sense of \cite{LV}, \cite{S} even for singular $X_s$.  To fully understand the family $\overline\Omega_{Y,p}$ we introduce one more concept, that of Ricci closability.

\begin{definition}\label{d:closeable}
Let $(M^{n-1},g)$ be a smooth closed Riemannian manifold.  We say that $M$ is Ricci closable if for every $\epsilon>0$, there exists a smooth (open) pointed Riemannian manifold $(N^n_\epsilon,h_\epsilon,q_\epsilon)$ such that:
\begin{enumerate}
 \item $\Ric(N_\epsilon)\geq 0$.
 \item The annulus $A_{1,\infty}(q_\epsilon)\subseteq N_\epsilon$ is isometric to $A_{1,\infty}(C(M,(1-\epsilon)g))$.
\end{enumerate}
\end{definition}
\begin{remark}
Note that if the stronger condition that there exists $N$ with $\Ric(N)\geq 0$ and $A_{1,\infty}(q)\equiv A_{1,\infty}(C(M,g))$ holds, then $(M,g)$ is certainly Ricci closable.  Ricci closability acts as a form of geometric trivial cobordism condition.
\end{remark}

Now we ask the question:  
\begin{myindentpar}{1cm}
What subsets $\Omega\subseteq \cM_{GH}$ can arise as $\overline\Omega_{Y,p}$ for some limit space $Y$ coming from a sequence $M_\alpha\rightarrow Y$ which satisfies conditions (\ref{con:Nonnegative_Ricci}) and (\ref{con:noncollapsed})?
\end{myindentpar}
We have written down some basic necessary conditions on $\overline\Omega_{Y,p}$, and our main theorem is that these conditions are sufficent as well.

\begin{theorem}\label{t:Smooth_Section_Existence}
Let $\Omega$ be an open connected manifold, our parameter space.  Let $\{(X^{n-1},g_s)\}_{s\in\Omega}\subseteq \cM_{GH}$, with $n\geq 3$, be a smooth family of closed manifolds such that (\ref{con:Volume_Constant}) and (\ref{con:Positive_Ricci}) hold and such that for some $s_0$ we have that $X_{s_0}$ is Ricci closable.  Then there exists a sequence of complete manifolds $(M^n_\alpha,g_\alpha,p_\alpha)\stackrel{GH}{\rightarrow} (Y,d_Y,p)$ which satisfy (\ref{con:Nonnegative_Ricci}) and (\ref{con:noncollapsed}) for which $\overline{\{X_s\}} = \overline\Omega_{Y,p}$, where $\overline{\{X_s\}}$ is the closure of the set $\{X_s\}$ in the Gromov-Hausdorff topology.
\end{theorem}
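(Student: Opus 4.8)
\emph{Strategy.} The plan is to produce the $(M_\alpha,g_\alpha,p_\alpha)$ as \emph{complete smooth} manifolds, each assembled as a finite tower of building blocks capped off by Ricci closability of $X_{s_0}$, and to identify their Gromov--Hausdorff limit $Y$ directly as a space which is smooth with $\Ric\geq 0$ away from one point $p$. First I would record the structural constraints the conclusion forces: since $\Ric(g_s)\geq (n-2)g_s>0$, Bonnet--Myers gives $\diam(X_s)\leq\pi$, so by Gromov's compactness theorem \cite{GLP,G} the set $\overline{\{X_s\}}\subseteq\cM_{GH}$ is compact, and since $\Omega$ is connected and $s\mapsto (X,g_s)$ is continuous into $\cM_{GH}$, it is path connected. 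Fix a countable subset $\{t_j\}_{j\geq 1}\subseteq\Omega$ with $t_1=s_0$ and $\{X_{t_j}\}$ dense in $\overline{\{X_s\}}$. I would then arrange for $Y$ to be a complete length space, smooth with $\Ric\geq 0$ off a single point $p$, such that in a punctured neighborhood of $p$, at a geometric sequence of scales $r_k=\lambda^k\downarrow 0$, the rescalings $(Y,r_k^{-1}d_Y,p)$ are as $\cM_{GH}$-close as we wish to the metric cones $C(X,g_{c(k)})$, where $k\mapsto c(k)\in\Omega$ traces a path that comes within distance $1/j$ of each $t_j$ for infinitely many $k$; in addition, at a subsequence of scales $\rho_\alpha\downarrow 0$ the metric is an \emph{exact} cone annulus over $(X,(1-\epsilon_\alpha)g_{s_0})$ with $\epsilon_\alpha\downarrow 0$ (note $\Ric((1-\epsilon)g_{s_0})\geq (n-2)(1-\epsilon)g_{s_0}$, so these exact cones still have $\Ric\geq 0$).

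\emph{Producing the $M_\alpha$ from $Y$.} Given such a $Y$, the approximating manifolds are obtained from Ricci closability: remove $B_{\rho_\alpha}(p)$ from $Y$ and glue in a $\rho_\alpha$-rescaled copy of $(N_{\epsilon_\alpha},h_{\epsilon_\alpha},q_{\epsilon_\alpha})$ along the cone annulus $A_{\rho_\alpha,\delta}(p)$, which matches \emph{isometrically} by (2) of Definition~\ref{d:closeable}; the same type of cap is used once at the outer end of $Y$ to make it complete. Then $M_\alpha$ is a complete smooth manifold with $\Ric(M_\alpha)\geq 0\geq -(n-1)g_\alpha$, so (\ref{con:Nonnegative_Ricci}) holds. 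Taking $p_\alpha$ near the inserted tip, $\Vol(B_1(p_\alpha))\to \Vol(B_1^{Y}(p))>0$, so (\ref{con:noncollapsed}) holds; and choosing $\rho_\alpha$ to decay fast relative to $\diam N_{\epsilon_\alpha}$, both the removed and the inserted pieces have rescaled diameter $\to 0$, whence $M_\alpha\xrightarrow{GH} Y$.

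\emph{The building blocks --- the main obstacle.} Everything reduces to the following: for $s,s'\in\Omega$ sufficiently close, construct a compact Riemannian metric on $[\lambda,1]\times X$ with $\Ric\geq 0$ whose two boundary ends are isometric to truncated metric-cone annuli over $(X,g_s)$ and $(X,g_{s'})$; one then rescales the $k$-th such block by $\lambda^k$, glues the tower, and interleaves the exact $(1-\epsilon_\alpha)$-cones from the previous paragraph. The essential difficulty --- and the reason the theorem is not routine --- is that a rotationally symmetric ansatz $dr^2+\varphi(r)^2h(r)$ does \emph{not} suffice: the leading contributions to $\Ric(\partial_r,\cdot)$ and to the $X$-directional Ricci produced by a nonzero $\partial_r h$ are \emph{first order} in the displacement $d_{\cM_{GH}}(g_s,g_{s'})$, and once the $k$-th block is rescaled to scale $\lambda^k$ this contribution is amplified by $\lambda^{-2k}$ and destroys the lower bound; in fact one checks that any symmetric model of this type is forced to have its cross section vary through a set of \emph{finite} length, so its tangent cone would be unique. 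Thus one must build a genuinely non-rotationally-symmetric interpolation that spends the strictly positive Ricci curvature made available by the strict inequality $\Vol(X_s)<\Vol(S^{n-1}(1))$ in (\ref{con:Volume_Constant}) --- exactly the slack encoded in the Ricci-closability hypothesis. Getting these blocks to have $\Ric\geq 0$ for displacements $\delta_k=d_{\cM_{GH}}(g_{c(k)},g_{c(k+1)})\to 0$ with $\sum_k\delta_k=\infty$ (so that the path $c$ can be dense) is the crux of the argument.

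\emph{Verifying $\overline{\{X_s\}}=\overline\Omega_{Y,p}$.} For "$\supseteq$": by construction $M_\alpha\to Y$ with (\ref{con:Nonnegative_Ricci}),(\ref{con:noncollapsed}), so $Y$ is a noncollapsed limit as in the hypotheses; along the scales $r_k=\lambda^k$ with $c(k)$ close to $t_j$ (and $\delta_k\to 0$, so the block is negligibly perturbed), $(Y,r_k^{-1}d_Y,p)\to C(X_{t_j})$, giving $X_{t_j}\in\overline\Omega_{Y,p}$; since $\overline\Omega_{Y,p}$ is closed and $\{X_{t_j}\}$ is dense in $\overline{\{X_s\}}$, we get $\overline{\{X_s\}}\subseteq\overline\Omega_{Y,p}$. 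For "$\subseteq$": by \cite{ChC1,ChC2} every tangent cone of $Y$ at $p$ is a metric cone $C(X_p)$; by construction, for every $r\downarrow 0$ the rescaled annulus $A_{\lambda,\lambda^{-1}}\big((Y,r^{-1}d_Y),p\big)$ is $\cM_{GH}$-close to a cone annulus over some $(X,g_s)$, so $X_p$ is a $\cM_{GH}$-limit of such $X_s$ and hence $X_p\in\overline{\{X_s\}}$. Combining the two inclusions completes the proof of Theorem~\ref{t:Smooth_Section_Existence}.
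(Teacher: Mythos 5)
The overall scaffolding of your argument---cap the singularity with Ricci-closability, let the cross section wander along an infinite-length path in $\Omega$ accumulating on all of $\overline{\{X_s\}}$, then read off tangent cones at a sequence of scales---does match the paper. But your central technical claim is wrong, and it is exactly the point the paper's Lemma~\ref{l:ExamTech} is built to overcome.

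You assert that a warped-product ansatz $\bar g = dr^2 + \varphi(r)^2\, g(f(r))$ ``does not suffice,'' that the terms linear in $\partial_s g$ cannot be absorbed once rescaled, and that ``any symmetric model of this type is forced to have its cross section vary through a set of finite length.'' This is precisely what the paper refutes. The metric is taken in exactly that form, $\bar g = dr^2 + r^2 h^2(r)\, g(f(r))$, with $h(r)=1-E/\log(-\log(r_0 r))$ and $f(r)=-F\log\log(-\log(r_0 r))$; note $f(r)\to-\infty$ as $r\to 0$, so the path traversed in moduli space has \emph{infinite} length (the speed $|f'(r)|\sim F/\bigl(r(-\log r)\log(-\log r)\bigr)$ integrates to a divergent $\log\log$), while $rf'(r)\to 0$ so each rescaling converges to an honest cone. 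The dangerous first-order terms you worry about are killed or tamed by two observations you missed. First, after applying Moser's theorem one may reparametrize the family so that the volume form $dv_{g(c(s))}$ is literally constant; this forces $g^{ab}g'_{ab}=0$ and turns $g^{ab}g''_{ab}$ into the quadratic quantity $g^{ab}g^{pq}g'_{ap}g'_{bq}$, eliminating the uncontrollable pieces in $\overline{\Ric}_{rr}$ and $\overline{\Ric}_{ij}$. Second, the deficit $h<1$ (i.e.\ $\epsilon=1-h>0$) together with the strict cross-sectional bound $\Ric[g(s)]\ge (n-2)g(s)$ releases a positive contribution $(n-2)\epsilon/(r^2 h^2)\sim E/(\log(-\log r)\,r^2)$ in the tangential directions, which dominates the remaining first-order mixed terms of size $\sim DF/\bigl(\log(-\log r)(-\log r)r^2\bigr)$ by the extra factor $1/(-\log r)$; choosing $E\gtrsim DF$ and $r_0$ small closes the estimate, including the genuinely negative $\overline{\Ric}_{ir}$ term, via the case split in the paper. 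So not only is a non-symmetric ``block tower'' unnecessary, but your proposal replaces the one genuine idea of the proof (volume normalization plus iterated-log warping) with a step you declare impossible and then leave unproven. The other pieces of your outline (density of the accumulation set giving $\supseteq$, cone structure plus closeness to some $C(X_s)$ giving $\subseteq$, the gluing of $N_{\epsilon_\alpha}$ along exact cone annuli to build $M_\alpha$) are consistent with the paper's argument, but they hinge on the existence of the warped metric with $\Ric\geq 0$, which your proposal does not establish.
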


\begin{remark}
 In fact, in the construction we will build the $M_\alpha$ to satisfy $\Ric(M_\alpha)\geq 0$.  Note here that $\Omega$, as a parameter space, is a smooth manifold which we are viewing as being embedded $\Omega\subseteq \cM_{GH}$ inside the space of metric spaces.
\end{remark}

In the applications we will be interested not so much in the smooth cones $C(X_s)$ which arise as tangent cones at $p\in Y$, but in the cones $C(X)$ where $X$ lies in the boundary of the closure $X\in \overline{\{X_s\}}\setminus \{X_s\}$.  There are two primary examples we will be interested in constructing through Theorem \ref{t:Smooth_Section_Existence}.  First, we will construct an example of a limit space $(Y,d_Y,p)$ such that at $p\in Y$ tangent cones are highly nonunique, and in fact, for every $0\leq k\leq n-2$ we can find a tangent cone that splits off precisely an $\RR^{k}$ factor.  Note this is in distinct contrast to the $\RR^n$ case, where if one tangent cone at a point is $\RR^n$, then so are all the other tangent cones at that point, see \cite{C}\footnote{For a limit of a sequence that collapses the situation is quite different, see \cite{M2}.}.  Note that if a tangent cone splits off an $\RR^{n-1}$ factor, then by \cite{ChC2} it is actually a $\RR^n$ factor, so that the nonunique splitting of $\RR^{k}$ factors for every $0\leq k\leq n-2$ is the most degenerate behavior one can get at a single point.  More precisely we have the following:

\begin{theorem}\label{t:Example_Rk_splitting}
For every $n\geq 3$, there exists a limit space $(M_\alpha^n,g_\alpha,p_\alpha)\stackrel{GH}{\rightarrow} (Y,d_Y,p)$ where each $M_\alpha$ satisfy (\ref{con:Nonnegative_Ricci}) and (\ref{con:noncollapsed}), and such that for each $0\leq k\leq n-2$, there exists a tangent cone at $p$ which is isometric to $\RR^k\times C(X)$, where $X$ is a smooth closed manifold not isometric to the standard sphere.
\end{theorem}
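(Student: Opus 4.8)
The plan is to deduce Theorem~\ref{t:Example_Rk_splitting} from Theorem~\ref{t:Smooth_Section_Existence} by producing an explicit smooth family of cross sections whose Gromov--Hausdorff closure contains, for each $0\le k\le n-2$, a space $W_k$ with $C(W_k)=\RR^k\times C(S^{n-k-1}(L_k))$. Fix $L\in(0,1)$, set $V=L^{n-1}\Vol(S^{n-1}(1))$, and for $1\le k\le n-2$ set $L_k=L^{(n-1)/(n-k-1)}\in(0,1)$. Let $W_k=S^{k-1}(1)\ast S^{n-k-1}(L_k)$ be the spherical join (so $W_0=S^{n-1}(L)$, and for $k=1$ this is the spherical suspension of $S^{n-2}(L_1)$). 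Then $W_k$ is an $(n-1)$-dimensional length space, $C(W_k)=C(S^{k-1}(1))\times C(S^{n-k-1}(L_k))=\RR^k\times C(S^{n-k-1}(L_k))$, this cone has $\Ric\ge 0$ since $L_k\le 1$, and the join volume formula gives $\Vol(W_k)=L_k^{n-k-1}\Vol(S^{n-1}(1))=V$, so the $W_k$ all have the common volume $V<\Vol(S^{n-1}(1))$ forced by (\ref{con:Volume_Constant}). Hence it suffices to construct an open connected finite-dimensional parameter manifold $\Omega$ and a smooth family $\{(S^{n-1},g_s)\}_{s\in\Omega}$ with $\Vol(g_s)\equiv V$, $\Ric(g_s)\ge n-2$, with $g_{s_0}=S^{n-1}(L)$ for some $s_0\in\Omega$, and with $W_k\in\overline{\{g_s\}}$ for each $1\le k\le n-2$: then Theorem~\ref{t:Smooth_Section_Existence} (whose remaining hypothesis, Ricci closability of $S^{n-1}(L)$, is checked below) yields $(M_\alpha^n,g_\alpha,p_\alpha)\to(Y,d_Y,p)$ with $\overline\Omega_{Y,p}=\overline{\{g_s\}}$, so that $C(W_0),\dots,C(W_{n-2})$ are tangent cones at $p$ of the asserted form, with $S^{n-k-1}(L_k)$ a smooth closed manifold not isometric to the standard sphere since $L_k<1$ (for $k=0$ the relevant tangent cone $C(S^{n-1}(L))$ is already realized by the family member $g_{s_0}$ itself).

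To build the family I would work with doubly warped metrics. Writing $S^{n-1}=S^{k-1}\ast S^{n-k-1}$, consider $g=d\theta^2+A(\theta)^2 g_{S^{k-1}(1)}+B(\theta)^2 g_{S^{n-k-1}(1)}$ on $\theta\in[0,\Theta]$, with $A(0)>0$, $A'(0)=0$, $B(0)=0$, $B'(0)=1$ at one pole and $A(\Theta)=0$, $A'(\Theta)=-1$, $B(\Theta)>0$, $B'(\Theta)=0$ at the other; such a $g$ is a smooth metric on $S^{n-1}$. The round sphere $S^{n-1}(L)$ is the profile $(A,B)=\bigl(L\cos(\theta/L),\,L\sin(\theta/L)\bigr)$, $\Theta=L\pi/2$, while $W_k$ is the profile $(A,B)=(\cos\theta,\,L_k\sin\theta)$, $\Theta=\pi/2$, which is smooth except at $\theta=0$, where $B'(0)=L_k\neq 1$ (the edge singularity of $W_k$ along the $S^{k-1}$ factor). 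For each $1\le k\le n-2$ I would construct a smooth arc $s\mapsto (A_s,B_s)$, $s\in(0,1]$, with $(A_1,B_1)$ the round profile and $(A_s,B_s)\to(\cos\theta,L_k\sin\theta)$ as $s\to 0$ (the vertex of $B$ rounded off at scale $\sim s$: $B_s'(0)=1$ with $B_s=L_k\sin\theta$ outside $[0,s]$), keeping $\Ric(g_s)\ge n-2$ throughout, and then arrange $\Vol(g_s)\equiv V$ exactly by a final constant rescaling, which preserves the bound since scaling $g_s$ by a constant $c_s^2\le 1$ does not change $\Ric$ while relaxing the inequality $\Ric\ge (n-2)g_s$ (the pre-normalized metrics being built with volume $\ge V$). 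With $p=k-1$, $q=n-k-1$ the doubly warped product satisfies
\begin{align*}
\Ric(\partial_\theta,\partial_\theta)&=-p\frac{A''}{A}-q\frac{B''}{B},\\
\Ric(\mathbf e,\mathbf e)&=-\frac{A''}{A}+(p-1)\frac{1-(A')^2}{A^2}-q\frac{A'B'}{AB},\\
\Ric(\mathbf f,\mathbf f)&=-\frac{B''}{B}+(q-1)\frac{1-(B')^2}{B^2}-p\frac{A'B'}{AB},
\end{align*}
for $\mathbf e$, $\mathbf f$ unit vectors tangent to the $S^{k-1}$, respectively $S^{n-k-1}$, factor; one checks that both the round profile and the limiting profile of $W_k$ make all three expressions $\ge p+q=n-2$ (with many equalities), and the construction is designed so that the interpolation preserves these. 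Finally $\Omega$ is obtained by joining the $n-2$ arcs (all issuing from the single point $S^{n-1}(L)$) into a connected set inside the space of smooth metrics with $\Ric>n-2$ and $\Vol=V$, and thickening it to an open connected finite-dimensional submanifold (e.g.\ by parametrizing the profiles by finitely many control points); $s_0$ is the point $S^{n-1}(L)$, $\overline{\{g_s\}}$ contains every $W_k$, and the hypotheses of Theorem~\ref{t:Smooth_Section_Existence} hold.

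It remains to verify that $S^{n-1}(L)$ is Ricci closable in the sense of Definition~\ref{d:closeable}. Given $\epsilon>0$, take the rotationally symmetric manifold $N_\epsilon=\bigl(\RR^n,\,dr^2+f(r)^2 g_{S^{n-1}(1)}\bigr)$, smoothly capped at the origin, with $f(0)=0$, $f'(0)=1$, and $f'$ monotonically decreasing from $1$ down to $\sqrt{1-\epsilon}\,L<1$ with $f''\le 0$ for $r\le 1$, and $f(r)=\sqrt{1-\epsilon}\,L\,r$ for $r\ge 1$. Then $\Ric(\partial_r,\partial_r)=-(n-1)f''/f\ge 0$ and, for $\mathbf e$ tangent to the sphere factor, $\Ric(\mathbf e,\mathbf e)=-f''/f+(n-2)\bigl(1-(f')^2\bigr)/f^2\ge 0$, using $f''\le 0$ and $0<f'\le 1$; and $A_{1,\infty}(N_\epsilon)$ is isometric to the annulus of the metric cone $C\bigl(S^{n-1}(\sqrt{1-\epsilon}\,L)\bigr)=C\bigl(S^{n-1},(1-\epsilon)g\bigr)$, as required.

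The main obstacle is the interpolation in the second step when $k\ge 2$, i.e.\ $p=k-1\ge 1$, so that the cross term $q\,A'B'/(AB)$ in the $S^{k-1}$-direction Ricci is genuinely present: for the profile of $W_k$ this term contributes exactly $+q$ (since there $A'B'/(AB)\equiv -1$), so one cannot simply round off the vertex of $B$ while leaving $A$ unchanged, as this makes $\Ric(\mathbf e,\mathbf e)$ drop below $n-2$ near $\theta=0$. The profiles $A_s,B_s$ must therefore be deformed in a coordinated way (controlling, for instance, $A_sB_s$ or $\ln(B_s/\sin\theta)$ through the transition region), and checking all three Ricci inequalities simultaneously along the whole deformation requires a careful ODE-inequality analysis. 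When $n=3$ there is no cross term: one may take $\Omega$ to consist of surfaces of revolution $d\theta^2+\varphi(\theta)^2 g_{S^1(1)}$ with $\varphi''\le-\varphi$ degenerating to footballs $\varphi=L^2\sin\theta$, and the construction is elementary.
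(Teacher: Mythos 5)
Your overall strategy is the same as the paper's: produce a smooth family $\{(S^{n-1},g_s)\}$ of cross sections with constant volume and $\Ric\ge n-2$, whose Gromov--Hausdorff closure contains, for each $0\le k\le n-2$, a $k$-fold spherical suspension $W_k$ of a small $(n-k-1)$-sphere (so that $C(W_k)=\RR^k\times C(S^{n-k-1}(L_k))$), then feed that family into Theorem~\ref{t:Smooth_Section_Existence}, with the round sphere member $g_{s_0}=S^{n-1}(L)$ providing Ricci closability. Your bookkeeping for the volumes $L_k=L^{(n-1)/(n-k-1)}$, the identification $C(W_k)=\RR^k\times C(S^{n-k-1}(L_k))$, the Ricci formulas for the two endpoint profiles, and the rotationally symmetric closing of $S^{n-1}(L)$ are all correct.

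Where you diverge is in how the family is built, and this is where your write-up has a genuine gap that you yourself flag. You use doubly warped products $d\theta^2+A^2g_{S^{k-1}}+B^2g_{S^{n-k-1}}$, one arc per $k$, and the hard step --- verifying $\Ric\ge n-2$ along the entire interpolation from the round profile to the $W_k$ profile, especially the cross term $-qA'B'/(AB)$ in $\Ric(\mathbf e,\mathbf e)$ when $k\ge2$ --- is left as ``a careful ODE-inequality analysis.'' This is precisely what is not automatic: rounding off $B$ at the vertex while keeping $A$ fixed drops $\Ric(\mathbf e,\mathbf e)$ below $n-2$, and a coordinated deformation of $(A,B)$ subject to three simultaneous Ricci inequalities plus the volume constraint has not been produced. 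The joining and thickening of $n-2$ arcs (built in different join decompositions of $S^{n-1}$) into a single open connected parameter manifold is also asserted rather than carried out.

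The paper sidesteps all of this. It parametrizes the family by $\vec t\in\Omega\subseteq\RR^{n-1}$ and uses \emph{iterated} $t$-suspensions $g_{\vec t}=S_{t_1}(\cdots S_{t_{n-2}}(S^1(t_{n-1})))$; the volume constraint carves out an $(n-2)$-dimensional connected open level set $\Omega$ containing the round sphere, solving the connectedness issue by construction. Crucially, these $g_{\vec t}$ carry an \emph{Alexandrov sectional curvature} lower bound $\sec>1+\epsilon(\vec t)$, not merely a Ricci bound, and their singular sets are totally geodesic isometric spheres. This turns the smoothing into the by-now-standard Perelman/Menguy gluing along normal coordinates, which uniformly preserves $\sec\ge1$ (hence $\Ric\ge n-2$) --- no bespoke ODE interpolation is needed. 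If you want to keep your doubly warped ansatz, you would need to actually exhibit $(A_s,B_s)$ and verify the three inequalities pointwise (or observe that your endpoint profiles also have $\sec\ge1$ and restrict to interpolations that preserve $\sec\ge1$ in the Alexandrov sense, as the paper does); otherwise the argument is incomplete for $k\ge2$.
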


This example has the, potentially unfortunate, consequence that a topological stratification of a limit space $Y$ in the context of lower Ricci curvature can't be done based on tangent cone behavior alone.  This should be contrasted to the case of Alexandrov spaces, see \cite{P3}.  This also gives an example of a three dimensional limit space with nonunique tangent cones.

Our next example is of a limit space $(Y,d_Y,p)$, such that at $p\in Y$ there exist distinct tangent cones which are not only not isometric, but they are not even homeomorphic.  More precisely we have:

\begin{theorem}\label{t:Example_nonhomeomorphic}
There exists a limit space $(M_\alpha^5,g_\alpha,p_\alpha)\stackrel{GH}{\rightarrow} (Y^5,d_Y,p)$ of a sequence $M_\alpha$ satisfying (\ref{con:Nonnegative_Ricci}) and (\ref{con:noncollapsed}), and such that there exists distinct tangent cones $C(X_0)$, $C(X_1)$ at $p\in Y$ with $X_0$ homeomorphic to $\mathds{C}P^2\sharp \overline{\mathds{C}P}^2$ and $X_1$ homeomorphic to $\text{S}^4$.
\end{theorem}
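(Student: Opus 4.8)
The plan is to derive the statement from Theorem~\ref{t:Smooth_Section_Existence}, so the whole task is to construct a suitable smooth family of four-dimensional cross sections (here $n=5$, so the cross sections are closed $4$-manifolds $X_s$ and the relevant bound from (\ref{con:Positive_Ricci}) is $\Ric(X_s)\geq n-2=3$). Since a smooth family of closed manifolds over a connected base is a fiber bundle (Ehresmann), all members $X_s$ are diffeomorphic to a single $4$-manifold $X$, which I take to be the nontrivial $\Sn^2$-bundle over $\Sn^2$, that is, $\CC P^2\sharp\overline{\CC P}^2$. Then $X_0:=(X,g_{s_0})$ is homeomorphic to $\CC P^2\sharp\overline{\CC P}^2$ for any parameter $s_0$, and the real point is to engineer the family so that some boundary point $X_1\in\overline{\{X_s\}}\setminus\{X_s\}$ is homeomorphic to $\Sn^4$. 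Since the second Betti numbers are $b_2(\CC P^2\sharp\overline{\CC P}^2)=2\neq 0=b_2(\Sn^4)$, these two are not homeomorphic, and once Theorem~\ref{t:Smooth_Section_Existence} is applied, $C(X_0)$ and $C(X_1)$ are both tangent cones of the resulting $Y$ at $p$, giving the claim.

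For the family I will use cohomogeneity-one metrics. Both $\CC P^2\sharp\overline{\CC P}^2$ (realized as $\CC P^2$ blown up at an $SU(2)$-fixed point) and $\Sn^4$ carry cohomogeneity-one $SU(2)$-actions with principal orbit $\Sn^3$ and orbit space an interval $[0,L]$, differing only in the isotropy at the two ends: for $\CC P^2\sharp\overline{\CC P}^2$ both singular orbits are round $2$-spheres $SU(2)/S^1$, so $X$ is two disk bundles $SU(2)\times_{S^1}D^2$ glued along a common $\Sn^3$, whereas $\Sn^4=D^4\cup_{\Sn^3}D^4$ has point singular orbits. I write the metrics as doubly warped products $g=dt^2+f(t)^2\hat g+h(t)^2\theta^2$ on $[0,L]\times\Sn^3$, where $\hat g$ is round on $\Sn^2$, $\theta$ is the connection $1$-form of the Hopf fibration $\Sn^3\to\Sn^2$, $f(0),f(L)>0$ are the radii of the singular $\Sn^2$-orbits with $f'$ vanishing there, and $h$ vanishes to first order at $0$ and $L$ (the usual smoothness conditions closing up the fiber circle). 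The values $f(0),f(L)$ together with a few profile-shape parameters form $\Omega$. The standard warped-product curvature formulas turn $\Ric(g)\geq 3$ into a system of differential inequalities on $f$ and $h$, which can be met by taking $g$ to be a small modification of the round $\Sn^4$ supporting a ``bump'' near the two singular orbits; one then rescales each $g_s$ to a common volume $V$. The constraint $V\leq\Vol(\Sn^4(1))$ in (\ref{con:Volume_Constant}) is automatic, since $\Ric\geq 3$ in dimension $4$ forces $\Vol\leq\Vol(\Sn^4(1))$ by Bishop's inequality.

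The degeneration is produced by sending the parameters to the boundary of $\Omega$ so that $f(0),f(L)\to 0$, with the profiles arranged so that in the limit both $f$ and $h$ vanish to first order at $t=0$ and $t=L$, with linear coefficients tuned so that near those points $g$ becomes the Euclidean model $dt^2+t^2 g_{\Sn^3}$ (with $g_{\Sn^3}$ the round unit metric). In this limit each singular $2$-sphere orbit collapses to a point, the local end model changes from $SU(2)\times_{S^1}D^2$ to $D^4$, and the total space becomes $D^4\cup_{\Sn^3}D^4=\Sn^4$; the limiting metric $g_1$ is a smooth metric on $\Sn^4$ with $\Ric(g_1)\geq 3$, and $(X,g_s)\stackrel{GH}{\to}(\Sn^4,g_1)$, so that $X_1:=(\Sn^4,g_1)\in\overline{\{X_s\}}\setminus\{X_s\}$ is homeomorphic (in fact diffeomorphic) to $\Sn^4$. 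It remains to include in the family one Ricci closable member in the sense of Definition~\ref{d:closeable}; this can be arranged because the cohomogeneity-one metric extends, after the $(1-\epsilon)$-rescaling, to a complete $\Ric\geq 0$ metric that is conical near infinity on the total space of the nontrivial $D^3$-bundle over $\Sn^2$ (whose boundary is $\CC P^2\sharp\overline{\CC P}^2$), which is exactly the ``geometric trivial cobordism'' filling built into the definition. Theorem~\ref{t:Smooth_Section_Existence} then produces a sequence $(M^5_\alpha,g_\alpha,p_\alpha)\stackrel{GH}{\to}(Y^5,d_Y,p)$ satisfying (\ref{con:Nonnegative_Ricci}) and (\ref{con:noncollapsed}) with $\overline\Omega_{Y,p}=\overline{\{X_s\}}$, and in particular $C(X_0)$ and $C(X_1)$ are tangent cones of $Y$ at $p$.

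I expect the main obstacle to be maintaining $\Ric(g_s)\geq 3$ uniformly as $f(0),f(L)\to 0$. Near a small singular orbit $f$ is close to the constant $\epsilon$, so the Ricci eigenvalue in the $\partial_t$-direction, which contains the term $-f''/f$, is extremely sensitive and forces $f$ to be convex with controlled second derivative throughout a fixed-size neighborhood of the endpoints, uniformly in $\epsilon$; at the same time the mixed $f$--$h$ curvature terms, the term $-h''/h$, the intrinsic curvature $(1-(f')^2)/f^2$ of the $\Sn^2$-factor, and the O'Neill correction coming from $\theta$ must all combine to keep $\Ric\geq 3g$ along the entire profile and in the limit. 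Producing explicit profiles $f,h$ satisfying all of these simultaneously, and checking the Ricci-closability extension, is the technical core; the remaining points — Ehresmann's theorem for the diffeomorphism type, the volume normalization, the identification of the Gromov--Hausdorff limit with $\Sn^4$, and the bookkeeping that $\overline{\{X_s\}}$ is compact and path connected — are routine.
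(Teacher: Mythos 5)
Your high-level strategy is the same as the paper's: build a smooth family of cohomogeneity-one $SU(2)$ doubly warped product metrics on $\CC P^2\sharp\overline{\CC P}^2$ over $(0,L)\times \Sn^3$ with $\Ric\geq 3$, collapse the two singular $\Sn^2$-orbits so that a boundary point of the family is homeomorphic to $\Sn^4$, verify Ricci closability using the nontrivial $\bar D^3$-bundle over $\Sn^2$, and invoke Theorem~\ref{t:Smooth_Section_Existence}. The ansatz you write, $dt^2 + f(t)^2\hat g + h(t)^2\theta^2$, is exactly the paper's $dr^2 + A(r)^2 dX^2 + B(r)^2(dY^2+dZ^2)$ with $A=h$ (Hopf fiber) and $B=f$ ($\Sn^2$ base), and the $D^3$-bundle filling is exactly the paper's $\cC^1\cup\cC^2$.

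There are, however, two points where the proposal differs from or overstates what the paper actually does, and one of them is a genuine problem. First, you claim the degenerate boundary metric $g_1$ is a \emph{smooth} metric on $\Sn^4$ with $\Ric(g_1)\geq 3$, with $f$ and $h$ both vanishing to first order with slope $1$. The paper is careful not to claim this: its limit $(\Sn^4,g_0)$ is a football with two conical singular points, obtained as the limit of a fixed football $\cF_{\bar\ell}$ ($\bar\ell<1$) with shrinking Perelman-type bubbles $\cB^1$ glued on. The cone angle deficit $\bar\ell<1$ is not incidental: as in Perelman's construction, it is the geometric ``room'' that makes the second fundamental form of the boundary of the ambient piece only mildly negative so that the uniformly convex boundary of the bubble can be glued while keeping $\Ric>0$. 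If one insists that the limit be the Euclidean model $dt^2+t^2 g_{\Sn^3}$ near the poles, one would have to push $\bar\ell\to 1$ while simultaneously shrinking the bubble, and the differential inequalities coming from $\Ric(\partial_t,\partial_t) = -\tfrac{h''}{h}-2\tfrac{f''}{f}\geq 3$ and $\Ric(X,X)/|X|^2 = -\tfrac{h''}{h}-2\tfrac{h'f'}{hf}+2\tfrac{h^2}{f^4}\geq 3$ become mutually incompatible across the transition region where $f$ must rise from $f(0)=\delta$, $f'(0)=0$ to $f\approx t$. So I would not expect a smooth limit; fortunately the statement only requires $X_1$ to be \emph{homeomorphic} to $\Sn^4$, which a football is, so the overclaim does not affect the validity of the theorem. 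You should replace ``smooth metric on $\Sn^4$'' by ``metric on $\Sn^4$ with two conical singularities.'' Second, and more substantively, the entire technical content — explicit profiles giving the uniform Ricci bound through the degeneration, the second-fundamental-form bookkeeping for the bubble gluings (the paper's $T(\partial\cB_\epsilon)>\lambda_\epsilon b_0$ versus $T(\partial\cF_\ell(s))>-\delta\rho(s)$), and the explicit two-piece filling $\cC^1\cup\cC^2$ with its own second-fundamental-form matching that establishes Ricci closability — is deferred. You acknowledge this at the end, so this is not a hidden gap, but it is exactly the part of the argument that the paper spends its entire Section 4 on.
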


Both of the last two theorems have analogues for tangent cones at infinity of open manifolds with nonnegative Ricci curvature and Euclidean volume growth.  We say that an open $n$-dimensional manifold with nonnegative Ricci curvature has Euclidean volume growth if for some $p\in M$ (hence all $p\in M$) there exists some $v>0$ such that for all $r>0$ we have that $\Vol (B_r(p))\geq v\,r^n$.  

\begin{theorem}
 We have the following:
\begin{enumerate}
 \item For $n\geq 3$, there exists a smooth open Riemannian manifold $(M^n,g)$ with $\Ric\geq 0$ and Euclidean volume growth such that for each $0\leq k\leq n-2$ one tangent cone at infinity of $M$ is isometric to $\RR^k\times C(X)$, where $X$ is a smooth closed manifold not isometric to the standard sphere.
\item There exists a smooth open Riemannian manifold $(M^5,g)$ with $\Ric\geq 0$ and Euclidean volume growth that has distinct tangent cones at infinity $C(X_0)$ and $C(X_1)$ with $X_0$ homeomorphic to $\mathds{C}P^2\sharp \overline{\mathds{C}P}^2$ and $X_1$ homeomorphic to $\text{S}^4$.
\end{enumerate}
\end{theorem}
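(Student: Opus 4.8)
The plan is to deduce both statements from a single construction, which is the same in spirit as the one used to prove Theorem~\ref{t:Smooth_Section_Existence} but carried out ``at infinity'' instead of ``at a point''. For part~(1) one feeds in the same smooth family $\{(X^{n-1},g_s)\}_{s\in\Omega}$ of cross sections used in the proof of Theorem~\ref{t:Example_Rk_splitting}, and for part~(2) the family used in the proof of Theorem~\ref{t:Example_nonhomeomorphic}; in either case the family satisfies (\ref{con:Volume_Constant}) and (\ref{con:Positive_Ricci}) and contains a Ricci closable member $X_{s_0}$.

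First I would choose a sequence of parameters $s_j\in\Omega$ together with scales $1=r_0<r_1<r_2<\cdots\to\infty$ so that (a) consecutive parameters $s_j$ and $s_{j+1}$ are close enough in $\Omega$ that a single ``neck'' can interpolate between the corresponding (slightly rescaled) cones, and (b) every cross section in $\overline{\{X_s\}}$ that we wish to realize --- in particular each boundary element $X\in\overline{\{X_s\}}\setminus\{X_s\}$ --- arises as a Gromov--Hausdorff limit of some subsequence $X_{s_{j_k}}$. I would then build the manifold as $M = N \cup_{X\times\{1\}} \big(X\times[1,\infty)\big)$, where $N$ is the compact cap furnished by Ricci closability of $X_{s_0}$ (so that a collar of $\partial N$ is isometric to $A_{1,2}(C(X_{s_0},(1-\epsilon_0)g_{s_0}))$ and $\Ric(N)\geq 0$), and on the cylinder $X\times[1,\infty)$ one puts a metric that, near each radius $r=r_j$, agrees with a small rescaling of the exact cone metric $dr^2+r^2 g_{s_j}$, and that transitions smoothly between consecutive cones on each annular region $\{r_j\leq r\leq r_{j+1}\}$. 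Keeping $\Ric\geq 0$ through each transition is the crux of the matter: this is exactly where the strict inequality in (\ref{con:Positive_Ricci}) --- which provides the room to pay the $(1-\epsilon)$ scaling tolls --- and the neck construction underlying Theorem~\ref{t:Smooth_Section_Existence} are used. One arranges that the cumulative scaling factors stay bounded away from $0$, and after a final global rescaling one has a smooth $(M,g)$ with $\Ric\geq 0$ that is $\epsilon_j$-close to the exact cone $C(X_{s_j})$ at each scale $r_j$, with $\epsilon_j\to 0$.

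Granting this, the remaining points are comparatively formal. Euclidean volume growth follows from Bishop--Gromov: the bound $\Vol(B_r(p))\leq c\,r^n$ is automatic from $\Ric\geq 0$, while at each scale $r_j$ the metric is $\epsilon_j$-close to the cone over $X_{s_j}$, whose cross sectional volume is the fixed constant $V>0$, so $\Vol(B_{r_j}(p))\geq \tfrac{1-\epsilon_j}{n}\,V\,r_j^n$, and monotonicity of $r\mapsto \Vol(B_r(p))/r^n$ upgrades this to $\Vol(B_r(p))\geq c'\,r^n$ for all $r\geq 1$. To identify the tangent cones at infinity, note that along any subsequence $r_{j_k}$ with $X_{s_{j_k}}\stackrel{GH}{\rightarrow}X_\infty$ the rescaled pointed spaces $(M,r_{j_k}^{-2}g,p)$ converge in the pointed Gromov--Hausdorff sense to the metric cone $C(X_\infty)$, since by construction the annulus $A_{\delta,\delta^{-1}}$ of the rescaled metric $r_{j_k}^{-2}g$ is $\epsilon_k'$-close to that of $C(X_\infty)$ with $\epsilon_k'\to 0$. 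Hence every $C(X)$ with $X\in\overline{\{X_s\}}$ occurs as a tangent cone at infinity of $M$; applying this to the family of Theorem~\ref{t:Example_Rk_splitting} gives the cones $\RR^k\times C(X^{n-k-1})$ of part~(1) with $X^{n-k-1}$ not the round sphere, and to the family of Theorem~\ref{t:Example_nonhomeomorphic} gives the cones over $\CC P^2\sharp\overline{\CC P}^2$ and over $\Sn^4$ of part~(2).

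The main obstacle is the curvature bookkeeping in the interpolation step --- constructing, on each annulus $\{r_j\leq r\leq r_{j+1}\}$, a smooth metric joining two slightly shrunk cones over nearby cross sections while preserving $\Ric\geq 0$ across all the gluings --- and this is precisely the technical heart that Theorem~\ref{t:Smooth_Section_Existence} already supplies; the volume estimate, the extraction of the tangent cones, and the reduction of parts~(1) and~(2) to the two families above are essentially the same arguments used for Theorems~\ref{t:Example_Rk_splitting} and~\ref{t:Example_nonhomeomorphic}.
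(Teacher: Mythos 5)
The paper does not write out a dedicated proof of this theorem; it is presented as the immediate "at infinity" analogue of Theorems~\ref{t:Example_Rk_splitting} and~\ref{t:Example_nonhomeomorphic}, and the intended argument is exactly what you outline: Lemma~\ref{l:ExamTech} constructs $\bar g = dr^2 + r^2 h^2(r)\,g(f(r))$ on all of $(0,\infty)\times X$ with $f(r)\to\infty$ and $rf'(r)\to 0$ as $r\to\infty$, so any one of the smooth manifolds $M_\alpha$ already built in the proof of Theorem~\ref{t:Smooth_Section_Existence} (with the family of Example~I for part~(1) or Example~II for part~(2)) has $\Ric\geq 0$, Euclidean volume growth, and non-unique tangent cones at infinity. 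In that sense your plan is the right one and in the same spirit as the paper; you could shorten it considerably by simply citing a fixed $M_\alpha$ rather than re-engineering a discrete neck-by-neck interpolation, since the lemma already does that interpolation in a single smooth formula.

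There is, however, one place where your argument asserts more than the construction gives, and it matters for part~(1). You claim that the metric can be made $\epsilon_j$-close to the exact cone $C(X_{s_j})$ at scale $r_j$ with $\epsilon_j\to 0$, with the scaling factors ``bounded away from $0$'' and removable by a ``final global rescaling.'' But in Lemma~\ref{l:ExamTech} the function $h$ satisfies $h(r)\to 1$ only as $r\to 0$; as $r\to\infty$ one has $h(r)\to h_\infty<1$, and this is forced: writing $h=1-\epsilon$, the requirement $\overline{\Ric}_{rr}\geq 0$ needs $(rh)''\leq 0$, i.e. $r\epsilon$ convex with $(r\epsilon)'(0^+)=0$, so $(r\epsilon)'$ is nonnegative and nondecreasing; since the curve $c(f(r))$ keeps moving as $r\to\infty$, $(r\epsilon)'$ is eventually bounded below by some $a>0$, whence $\liminf_{r\to\infty}\epsilon(r)\geq a>0$ and $\limsup_{r\to\infty} h(r)<1$. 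Consequently the cross sections of the tangent cones at infinity are $h_\infty X_s$, not $X_s$, and this is scale-invariant (no global rescaling changes the tangent cone at infinity up to isometry). For part~(2) this is harmless, since rescaling does not change the homeomorphism type of the cross section. For part~(1) it is a genuine issue: if $X_s=g_{(1,\dots,1,t,\dots,t)}$ is a $k$-fold standard suspension, then $h_\infty X_s = g_{(h_\infty,\dots,h_\infty,h_\infty t,\dots,h_\infty t)}$ has diameter $h_\infty\pi<\pi$ and its cone does not split any $\RR$ factor. So the statement ``$\epsilon_j\to 0$'' does not hold, and some additional argument is needed to recover the claimed isometric splitting $\RR^k\times C(X)$ at infinity --- for instance a modification of the family near infinity or of the neck function $h$ beyond what Lemma~\ref{l:ExamTech} as stated provides. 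Your write-up should either supply that modification or flag this as the point where the ``at infinity'' case genuinely differs from the ``at a point'' case, where $h(r)\to 1$ makes the splitting immediate.
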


Related to the above examples we conjecture the following:

\begin{conjecture}
Let $Y^n$ be a noncollapsed limit of Riemannian manifolds with lower Ricci bounds.  Let $\mathcal{NU}\subseteq Y$ be the set of points where the tangent cones at the given point are not unique, then $\dim_{Haus}(\mathcal{NU})\leq n-3$.
\end{conjecture}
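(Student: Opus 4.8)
I read the final displayed statement as the conjecture, and since it is an open problem I can only propose a line of attack; I will explain where it currently breaks down.

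The plan is to reduce the conjecture to the Cheeger--Colding stratification of $Y$ and then to try to gain one extra codimension on the top singular stratum. First I would recall the stratification $\cS^0\subseteq\cS^1\subseteq\cdots\subseteq\cS^{n-2}\subseteq Y$, where $\cS^k$ is the set of points no tangent cone of which splits off an isometric $\RR^{k+1}$ factor, together with the Cheeger--Colding facts that $\dim_{Haus}\cS^k\le k$, that there is no codimension-one stratum in the noncollapsed case ($\cS^{n-1}=\cS^{n-2}$), and that $Y\setminus\cS^{n-2}=\cR$ is the Euclidean (regular) set. The first, easy reduction is $\mathcal{NU}\subseteq\cS^{n-2}$: at $p\in\cR$ the density $\Theta(p):=\lim_{r\to0}\Vol(B_r(p))/(\omega_n r^n)$ exists by Bishop--Gromov monotonicity and equals $\omega_n$, which forces every tangent cone to be $\RR^n$; more generally any tangent cone $C(X)$ at $p$ satisfies $\Theta(p)=\Vol(X)/\Vol(\Sn^{n-1}(1))$, so a well-defined density already constrains the possible tangent cones. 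This yields the weak bound $\dim_{Haus}\mathcal{NU}\le n-2$ for free, and the real content of the conjecture is to show that the codimension-two part $\mathcal{NU}\cap(\cS^{n-2}\setminus\cS^{n-3})$ has dimension at most $n-3$.

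The key structural observation at a codimension-two point $q\in\cS^{n-2}\setminus\cS^{n-3}$ is that some tangent cone is $(n-2)$-symmetric, hence isometric to $\RR^{n-2}\times C(\Sn^1_\ell)$ for a flat circle $\Sn^1_\ell$ of length $\ell=\ell(q)<2\pi$ (length $2\pi$ would make $q$ regular); since $\Theta(q)$ is well defined and determines $\ell=2\pi\,\Theta(q)$, non-uniqueness at $q$ forces a second tangent cone at $q$ that is not $(n-2)$-symmetric at all. Thus $\mathcal{NU}\cap(\cS^{n-2}\setminus\cS^{n-3})$ lies in the set of points where the degree of symmetry of tangent cones jumps — a set which by Theorem~\ref{t:Example_Rk_splitting} is genuinely nonempty, so the conjecture is not formal. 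To bound its dimension I would combine two ingredients: (i) a self-improving rigidity for the flat-cone model in codimension two — if $B_r(q)$ is $\epsilon$-Gromov--Hausdorff close to $\RR^{n-2}\times C(\Sn^1_\ell)$ with $\ell$ and $2\pi-\ell$ bounded below, then $B_{r/2}(q)$ is $\tfrac12\epsilon$-close to such a model, forcing a unique tangent cone at $q$ — which would place $\mathcal{NU}\cap(\cS^{n-2}\setminus\cS^{n-3})$, for a suitable $\epsilon=\epsilon(n,v)$, inside the quantitative stratum $\cS^{n-3}_\epsilon$ of Cheeger--Naber; and (ii) the Cheeger--Naber estimate $\dim_{Haus}\cS^{n-3}_\epsilon\le n-3$, which then closes it. If (i) is unavailable in this generality, the fallback is a Federer-type dimension reduction run directly on $\mathcal{NU}$: blow up at an $\cH^k$-density point with $k=\dim_{Haus}\mathcal{NU}$, use that the blow-up splits $\RR^k$, and use the rectifiability of the codimension-two stratum (Cheeger--Jiang--Naber) to argue that along $\cH^{n-2}$-a.e.\ point of $\cS^{n-2}\setminus\cS^{n-3}$ the limit acquires an extra Euclidean factor, ruling out $k=n-2$.

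The main obstacle is precisely ingredient (i): one needs a uniqueness-of-tangent-cone theorem valid off a set of dimension $\le n-3$ inside the codimension-two stratum, and Theorem~\ref{t:Example_Rk_splitting} shows that arbitrarily wild non-uniqueness does occur at codimension-two points, so any proof must confine this to a lower-dimensional set. The known codimension-two rigidity results — conical or orbifold structure of the top singular stratum — all presuppose extra hypotheses such as an Einstein condition, two-sided curvature bounds, or an a priori orbifold assumption, none of which is available for a general noncollapsed lower-Ricci limit; producing such rigidity from the lower bound alone, or replacing it with a dimension-reduction argument robust enough to detect the jump in symmetry, is the essential missing step, and is why the statement is posed as a conjecture rather than a theorem.
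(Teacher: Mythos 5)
The paper offers no proof of this statement: it is explicitly posed as a conjecture and remains open, so there is no argument in the paper against which to check your proposal. You recognize this correctly, and your reduction is mathematically sound as far as it goes. The containment $\mathcal{NU}\subseteq\cS^{n-2}$ via Bishop--Gromov density monotonicity and Colding's volume convergence theorem is right, as is the identification of the codimension-two model $\RR^{n-2}\times C(\Sn^1_\ell)$ with $\ell=\ell(q)<2\pi$ determined by $\Theta(q)$, and the observation that non-uniqueness at such a point forces a second tangent cone that fails to be $(n-2)$-symmetric. You also correctly locate the missing ingredient: a uniqueness-of-tangent-cone rigidity on the top singular stratum away from a codimension-three set, which under a lower Ricci bound alone is not available, and which the paper's Theorem~\ref{t:Example_Rk_splitting} shows cannot be upgraded to uniqueness everywhere on $\cS^{n-2}\setminus\cS^{n-3}$. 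Since the conjecture is unproven in the paper as well, the appropriate assessment is simply that your sketch accurately reflects both the known reductions and the genuine obstruction.
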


\begin{conjecture}
Let $Y^n$ be a noncollapsed limit of Riemannian manifolds with lower Ricci bounds.  Let $\mathcal{NH}\subseteq Y$ be the set of points where the tangent cones at the given point are not of the same homeomorphism type, then $\dim_{Haus}(\mathcal{NH})\leq n-5$.
\end{conjecture}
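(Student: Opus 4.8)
The plan is to reduce the estimate to the Cheeger--Colding stratification of the singular set. Recall that $Y^n$ carries a filtration $\mathcal{S}^0\subseteq\mathcal{S}^1\subseteq\cdots\subseteq\mathcal{S}^{n-1}$, where $\mathcal{S}^k$ is the set of $p\in Y$ at which no tangent cone splits off an isometric $\RR^{k+1}$ factor, and that $\dim_{Haus}\mathcal{S}^k\le k$ by \cite{ChC1,ChC2}. Since $\dim_{Haus}\mathcal{S}^{n-5}\le n-5$, it would suffice to establish the inclusion $\mathcal{NH}\subseteq\mathcal{S}^{n-5}$, i.e. to show:
\begin{quote}
if at $p\in Y$ some tangent cone is isometric to $\RR^{m}\times C(Z^{\,n-m-1})$ with $m\ge n-4$, then all tangent cones at $p$ are homeomorphic.
\end{quote}
This is consistent with Theorem~\ref{t:Example_nonhomeomorphic}, where the non-homeomorphic cross sections are $4$-dimensional, so that the phenomenon first appears in ``codimension $5$''; it also subsumes the case $m=n-1$, in which the tangent cone is $\RR^n$ and uniqueness, hence $p\notin\mathcal{NH}$, follows from \cite{C}.

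The first step is to pin down the topology of the ``good'' tangent cone. If some tangent cone at $p$ is $\RR^{m}\times C(Z)$ with $m\ge n-4$, then $Z=Z^{\,n-m-1}$ is a noncollapsed limit of dimension at most $3$, with $\Ric_Z\ge n-m-2\ge 0$ in the generalized sense of \cite{LV,S} and $\diam Z\le\pi$. Its singular set has Hausdorff dimension at most $\dim Z-2\le 1$, and one should prove that such a $Z$ is a closed topological manifold and that $C(Z)$, hence $\RR^{m}\times C(Z)$, has completely understood local topology near the vertex. This is a Reifenberg--type regularity question in a low-dimensional regime where it should be tractable, the point being to rule out the double-suspension phenomena that a priori obstruct recognizing metric cones up to homeomorphism from their cross sections.

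The second step, which I expect to be the main obstacle, is to transport this information to \emph{every} tangent cone at $p$. One would use that $\overline\Omega_{Y,p}$ is compact and path connected and that the $(n-1)$-dimensional Hausdorff volume $V$ of (\ref{con:Volume_Constant}) is constant along it: given the good tangent cone $C(X_0)=\RR^{m}\times C(Z)$ and an arbitrary $C(X_1)$, choose a path $X_t\in\overline\Omega_{Y,p}$ joining $X_0$ to $X_1$ and try to show that $C(X_t)$ has locally constant homeomorphism type in $t$. The difficulty is that this cannot be done by carrying the low-dimensional cross section along the path: by Theorem~\ref{t:Example_Rk_splitting} the Euclidean splitting rank of the tangent cones at a single point need not be constant, so $C(X_t)$ need not continue to split off $\RR^{m}$, and in particular one must rule out ``mixed'' points at which one tangent cone is highly Euclidean while another has topologically nontrivial cross section. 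What is really required is a stability theorem asserting that within the family of cross sections of tangent cones at a fixed point of a noncollapsed Ricci limit, the homeomorphism type of the associated metric cone is locally constant in the Gromov--Hausdorff topology. Since homeomorphism type is not known to be continuous under Gromov--Hausdorff convergence of noncollapsed Ricci limits in general, such a statement — presumably proved by an effective Reifenberg argument for cones that exploits the rigidity coming from the constancy of $V$ together with the cone structure — is the heart of the matter and the place where genuinely new input is needed.

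Finally, sharpness of the exponent $n-5$ should follow from a parametrized version of the construction behind Theorem~\ref{t:Example_nonhomeomorphic}: applying a fiberwise strengthening of Theorem~\ref{t:Smooth_Section_Existence} to the family obtained by crossing the $5$-dimensional example with $\RR^{n-5}$, one expects a limit $Y^n$ whose non-homeomorphic locus has Hausdorff dimension exactly $n-5$. The companion bound $\dim_{Haus}\mathcal{NU}\le n-3$ would be approached in the same way, with $\mathcal{S}^{n-5}$ replaced by $\mathcal{S}^{n-3}$ and the low-dimensional cross sections, of dimension at most $1$, handled directly via the volume continuity of \cite{C}.
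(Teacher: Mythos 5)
This statement is one of the two conjectures the paper poses; the paper offers no proof, and in fact Theorem~\ref{t:Example_nonhomeomorphic} is presented precisely as evidence that the exponent $n-5$ is the right one, not as part of an argument establishing it. So there is no ``paper's own proof'' to compare against, and what you have written is a strategy sketch rather than a proof.

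As a strategy, the reduction you propose --- show $\mathcal{NH}\subseteq\mathcal{S}^{n-5}$ and then invoke the Cheeger--Colding dimension bound $\dim_{Haus}\mathcal{S}^{k}\le k$ --- is a sensible attack and is consistent with the two examples in the paper (the non-homeomorphism example has cross-sections of dimension $4$, and the $m=n-1$ case collapses to the $\RR^n$-rigidity of \cite{C}). But the proposal contains two genuine gaps, and you are candid about the second one yourself. First, your Step 1 already requires real input: even granting that $C(Z)$ is a noncollapsed Ricci limit cone of dimension $\le 4$, it is not known in this generality that such a cone is a topological manifold near the vertex, nor that its cross section $Z$ has recognizable topology; the relevant structure theory in dimension $4$ (e.g.\ isolated singularities) is proved under hypotheses much stronger than a one-sided Ricci bound, and the double-suspension issue you flag is not obviously ruled out. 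Second, and more seriously, the stability statement in Step 2 --- that the homeomorphism type of $C(X_t)$ is locally constant along a path in the compact, connected set $\overline\Omega_{Y,p}$ --- is precisely what is not known; Gromov--Hausdorff closeness of noncollapsed Ricci limits does not by itself control homeomorphism type, Theorem~\ref{t:Example_Rk_splitting} shows the Euclidean splitting rank can jump along $\overline\Omega_{Y,p}$, and nothing in the constancy of the volume $V$ or the cone structure is currently known to supply the needed rigidity. Until that lemma is proved, $\mathcal{NH}\subseteq\mathcal{S}^{n-5}$ is an unverified claim, and the proposal does not constitute a proof of the conjecture.
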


In particular, we believe that for a four dimensional limit at each point tangent cones should be homeomorphic.

\vskip2mm
Finally, we mention that \cite{CN1} and \cite{CN2} contains some related results. In particular, in \cite{CN2} we will use some of the constructions of this paper. 

\section{Proof of Theorem \ref{t:Smooth_Section_Existence}}\label{s:ExamplesI}

The main technical lemma in the proof of Theorem \ref{t:Smooth_Section_Existence} is the following.

\begin{lemma}\label{l:ExamTech}
Let $X^{n-1}$ be a smooth compact manifold with $g(s)$, $s\in (-\infty,\infty)$, a family of metrics with $h_\infty<1$ such that:
\begin{enumerate}
\item $\Ric[g(s)]\geq (n-2)g(s)$.
\item $\frac{d}{ds}dv(g(s))=0$, where $dv$ is the associated volume form.
\item $|\partial_{s}g(s)|,|\partial_{s}\partial_{s}g(s)|\leq 1$ and $|\nabla\partial_{s}g(s)|\leq 1$, where the norms are taken with respect to $g(s)$.
\end{enumerate}

Then there exist functions $h:\mathds{R}^{+}\rightarrow(0,1)$ and $f:\RR^{+}\rightarrow(-\infty,\infty)$ with $lim_{r\rightarrow 0}h(r)=1$, $lim_{r\rightarrow \infty}h(r)=h_\infty$, $lim_{r\rightarrow 0}f(r)=-\infty$, $lim_{r\rightarrow \infty}f(r)=\infty$ and $lim_{r\rightarrow 0,\infty}rf'(r)=0$ such that the metric $\bar{g}=dr^{2}+r^{2}h^{2}(r)g(f(r))$ on $(0,\infty)\times X$ satisfies $\Ric[\bar{g}]\geq 0$.

Further if for some $T\in (-\infty,\infty)$ we have that $g(s)=g(T)$ for $s\leq T$ then we can pick $h$ such that for $r$ sufficiently small $h(r)\equiv 1$.
\end{lemma}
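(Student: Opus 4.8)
The plan is to look for $\bar g$ in the warped-product form $\bar g = dr^2 + \phi(r)^2 g(f(r))$ with $\phi(r) = r h(r)$, and to compute the Ricci tensor of such a metric explicitly in terms of $\phi$, $f$, and the $s$-derivatives of $g(s)$. The point is that $\bar g$ is a "doubly warped" or "generalized cone" metric whose fibers $X$ carry a varying metric $g(f(r))$; its Ricci curvature splits into three pieces: (i) a radial component $\Ric_{rr}$, which for a genuine warped product over an $(n-1)$-manifold is $-(n-1)\phi''/\phi$ plus correction terms coming from $\partial_r g(f(r))$; (ii) a fiber component, which is roughly $\Ric[g(f(r))] - \big[(n-2)(\phi')^2 + \phi\phi''\big] g(f(r))$ plus terms quadratic and linear in $f'\partial_s g$; and (iii) mixed $dr\,dx$ terms, which are controlled by $|\nabla \partial_s g|$ and vanish to leading order because of the trace condition $\tfrac{d}{ds} dv(g(s)) = 0$ (i.e. $g^{ij}\partial_s g_{ij} = 0$). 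I would carry out this computation first and record the three curvature expressions; this is bookkeeping but must be done carefully since the cross terms are what force hypothesis (3).

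Next I would choose the profile functions. Write $\phi(r) = r h(r)$; then the cone condition forces $h(r)\to 1$ as $r\to 0$ so that $(0,\infty)\times X$ with $\bar g$ closes up smoothly, and one arranges $h(r)\to h_\infty$ as $r\to\infty$ so that the end is asymptotic to the cone $C(X,h_\infty^2 g_\infty)$. The key algebraic fact is that for the \emph{exact} metric cone $dr^2 + r^2 g$ over a fixed $(X^{n-1},g)$ with $\Ric[g]\ge (n-2)g$, one has $\Ric \ge 0$ with equality in the radial direction; so the strategy is perturbative: choose $h$ decreasing very slowly from $1$ to $h_\infty$, and $f$ varying very slowly, so that all the correction terms — which carry at least one factor of $r h'(r)$, $(rh'(r))^2$, $r f'(r)$, or $(rf'(r))^2$, together with the bounded quantities $|\partial_s g|, |\partial_s^2 g|, |\nabla\partial_s g|\le 1$ — are small compared to the positive "gap" coming from $h_\infty < 1$ in the fiber term $\big(1 - (n-2)(\phi'/\ \text{stuff})\big)\cdots$. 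Concretely I expect to need $r h'(r), r f'(r) \to 0$ as $r\to 0$ and $r\to\infty$ (hence the stated limits $\lim_{r\to 0,\infty} r f'(r) = 0$), and to introduce a slowly-varying change of variables such as $t = \log r$, choosing $h$ and $f$ as functions of $t$ with derivatives bounded by a small constant $\delta$ and supported on a long $t$-interval. One then checks the two scalar inequalities $\Ric_{rr}\ge 0$ and $\Ric_{\text{fiber}}\ge 0$ (the mixed term being absorbed by Cauchy–Schwarz into a small loss in the other two) hold pointwise for $\delta$ small enough, using that $h<1$ strictly gives a definite positive lower bound on the leading fiber term after rescaling.

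The main obstacle is the fiber Ricci inequality near $r\to\infty$: there the metric looks like $dr^2 + r^2 h_\infty^2 g(f(r))$, an almost-exact cone over $(X, h_\infty^2 g_\infty)$, whose fiber metric $h_\infty^2 g_\infty$ has $\Ric = \Ric[g_\infty] \ge (n-2) g_\infty = (n-2) h_\infty^{-2}(h_\infty^2 g_\infty)$, i.e. a Ricci lower bound \emph{strictly larger} than $(n-2)$ times the metric — and it is exactly this slack, quantified by $h_\infty < 1$, that must dominate the error terms generated by the residual variation of $f$ and by $h$ not yet being constant. So the real content is a quantitative comparison: one shows that the "Bishop gap" $\big(h_\infty^{-2} - 1\big)(n-2)$ in the fiber direction dominates $C(n)\big(|rf'(r)| + |rf'(r)|^2 + |rh'(r)| + |rh'(r)|^2\big)$, and then simply chooses $h, f$ to decay/vary slowly enough (which is possible on the infinite interval $(0,\infty)$ precisely because there is no constraint forcing fast variation). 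For the final sentence, if $g(s)\equiv g(T)$ for $s\le T$, then near $r\to 0$ the fibers are a fixed metric $g(T)$ with $\Ric\ge (n-2)g(T)$, so one may simply take $h\equiv 1$ there and the metric is an exact cone $dr^2 + r^2 g(T)$, which has $\Ric\ge 0$ outright; the transition from $h\equiv 1$ to the slowly-decreasing regime is again handled by the smallness argument above.
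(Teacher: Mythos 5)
Your overall strategy matches the paper's: you use the same ansatz $\bar g = dr^2 + r^2 h^2(r)\,g(f(r))$, invoke the volume condition to cancel dangerous terms, and choose slowly-varying profile functions $h,f$ so that the positivity gap dominates the error terms. But as written the proposal has two concrete gaps, and one misattribution, that matter.

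First, the role of the volume condition (2) is not to make the mixed $dr\,dx^i$ terms vanish. The trace condition $g^{ab}g'_{ab}=0$ (and its $r$-derivative $g^{ab}g''_{ab}=|g'|^2$) is essential in $\overline{\Ric}_{rr}$: without it, $\overline{\Ric}_{rr}$ contains the terms $-\frac{(rh)'}{rh}\,g^{ab}g'_{ab}$ and $-\frac12 g^{ab}g''_{ab}$, which are of order $|f'|/r$ and $|f''|+|f'|^2$ respectively, both \emph{strictly larger} than the positive contribution $-(n-1)(rh)''/(rh)$ for any admissible profile, and with no sign. The volume condition cancels exactly these. The mixed term $\overline{\Ric}_{ir}$, on the other hand, is of order $|f'|$ no matter what; it is controlled by the $|\nabla\partial_s g|\le 1$ hypothesis and handled by the quadratic-form positivity argument, not by the trace condition.

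Second, and more seriously, the proposal never produces $h$ and $f$, and the informal prescription ``derivatives bounded by a small $\delta$, supported on a long $t=\log r$ interval'' does not suffice. The heart of the lemma is that near $r=0$ (and $r=\infty$) \emph{both} the positivity gap $\epsilon(r)=1-h(r)$ and the error $|rf'(r)|$ must tend to $0$, while $\epsilon$ must strictly dominate $|rf'|$ \emph{and} $|rf'|^2$, while $rh$ must remain concave (so $-(rh)''/(rh)>0$), while $f$ must still reach $\pm\infty$ (so $\int|f'|\,dr=\infty$). These constraints force a genuine hierarchy of decay rates; a uniformly ``slowly varying'' choice with matching rates for $\epsilon$ and $rf'$ fails the discriminant test $\overline{\Ric}_{rr}\cdot\overline{\Ric}_{ii}\ge \overline{\Ric}_{ir}^{\,2}$. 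The paper's iterated-logarithm profiles, $h=1-E/\log(-\log r_0 r)$ and $f=-F\log\log(-\log r_0 r)$, are engineered to make $\epsilon\sim 1/\log(-\log r)$ decay strictly more slowly than $|rf'|\sim 1/((-\log r)\log(-\log r))$, which is what makes the discriminant (equivalently, the two-case analysis the paper actually uses) close. Your Cauchy--Schwarz absorption is a valid alternative to the paper's case split \emph{once these profiles are in hand}, but asserting it without the profiles leaves the main quantitative step unfinished. Also, your ``Bishop gap'' $h_\infty^{-2}-1$ is only available at the $r\to\infty$ end; at $r\to 0$ the gap $\epsilon(r)\to 0$, so the argument there really does hinge on the above rate comparison and not on a fixed positive slack.
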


\begin{proof}
We only concern ourselves with the construction of $f$ and $h$ for $r\in (0,1)$.  Extending the construction for large $r$ is the same.

Now first we note that if $\bar{g}=dr^{2}+r^{2}h^{2}(r)g(f(r))$ as above then the following equations hold for the Ricci tensor, where the primes represent $r$ derivatives.
\begin{align}\label{e:R_rr}
 \overline{\Ric}_{rr} = -(n-1)\frac{(rh)''}{rh}+\frac{1}{4}g^{ab}g^{pq}g'_{ap}g'_{bq} -\frac{(rh)'}{rh}g^{ab}g'_{ab}-\frac{1}{2}g^{ab}g''_{ab}.
\end{align}
\begin{align}
 \overline{\Ric}_{ir} = \frac{1}{2}[\partial_{a}(g^{ab}g'_{bi}) - \partial_{i}(g^{ab}g'_{ab}) + \frac{1}{2}(g^{ab})'(\partial_{i}g_{ab} - g_{ib}g^{pq}\partial_{a}g_{pq})]
\end{align}
\begin{align}
\overline{\Ric}_{ij} = \Ric_{ij} + r^{2}h^{2}[(-(n-2)(\frac{(rh)'}{rh})^{2} -\frac{(rh)''}{rh}- \frac{1}{2}g^{ab}g'_{ab})g_{ij} \notag
\\
+(-\frac{n}{2}\frac{(rh)'}{rh} -\frac{1}{4}g^{ab}g'_{ab})g'_{ij} +\frac{1}{2}g^{ab}g'_{ai}g'_{bj}]
\end{align}

In the estimates it will turn out that terms involving either second derivatives of $g$ or products of first derivatives of $h$ and $g$ cannot be controlled in general.  Luckily the constant volume form tells us that
$$g^{ab}g'_{ab}=0\, ,$$
and by taking the $r$ derivative we get that
$$g^{ab}g''_{ab} = g^{ab}g^{pq}g'_{ap}g'_{bq}\, .$$

When we substitute these into (\ref{e:R_rr}) above we get
\begin{align}
\overline{\Ric}_{rr} = -(n-1)\frac{(rh)''}{rh} -\frac{1}{4} g^{ab}g^{pq}g'_{ap}g'_{bq}\, ,
\end{align}
similar substitutions may be made for the other equations.

Now for positive numbers $E,F\leq 1$ to be chosen define the functions
\begin{align}
 h(r)=1-\epsilon(r)=1-\frac{E}{\log(-\log(r_0r))}
\end{align}
and
\begin{align}
f(r)=-F\log(\log(-\log(r_0r)))\, ,
\end{align}
for $r\leq r_0$ to be chosen.  The following computations are straight forward:
\begin{align}
 \epsilon(r)=\frac{E}{\log(-\log (r_0r))}, \epsilon'(r)=\frac{E}{(\log(-\log (r_0r)))^{2}(-\log (r_0r))r},\notag
\\
\epsilon''(r)=\frac{E(-1+\frac{1}{(-\log (r_0r))}+\frac{2}{\log (-\log (r_0r))(-\log (r_0r))})} {(\log (-\log (r_0r)))^{2}(-\log (r_0r))r^{2}}
\end{align}
and so
\begin{align}
 \frac{(rh)'}{rh}=(\frac{1}{r}-\frac{\epsilon'}{1-\epsilon}) = \frac{1}{r}(1-\frac{E}{(1-\epsilon)(\log (-\log (r_0r)))^{2}(-\log (r_0r))})\leq \frac{1}{r}\, ,
\end{align}
\begin{align}
 \frac{(rh)''}{rh} = (-\frac{\epsilon''}{1-\epsilon} - \frac{2\epsilon'}{r(1-\epsilon)}) = \frac{-E(1+\frac{1}{(-\log (r_0r))}+\frac{2}{\log (-\log (r_0r))(-\log (r_0r))})} {(\log (-\log (r_0r)))^{2}(-\log (r_0r))r^{2}(1-\epsilon)} \notag\\
= -\frac{E}{2(\log (-\log (r_0r)))^{2}(-\log (r_0r))r^{2}}\, ,
\end{align}
where the last inequality holds for $r\leq 1$ and $r_0$ sufficiently small.  Also by our assumptions on $g(s)$ we have that $|g'|\leq |f'|\leq \frac{F}{\log (-\log (r_0r))(-\log (r_0r))r}$.  Finally, if we plug all of this into our equations for the Ricci tensor we get, where $D=D(n)$ is a dimensional constant:
\begin{align}\label{e:Rc1}
\overline{\Ric}_{rr}\geq \frac{E}{(\log (-\log (r_0r)))^{2}(-\log (r_0r))r^{2}}- \frac{DF^{2}}{(\log (-\log (r_0r)))^{2}(-\log (r_0r))^{2}r^{2}} \notag
\\
\geq \frac{E}{2(\log (-\log(r_0r)))^{2}(-\log (r_0r))r^{2}}\, ,
\end{align}
\begin{align}\label{e:Rc2}
\overline{\Ric}_{ir}\geq \frac{-DF}{\log (-\log(r_0r))(-\log (r_0r))r}\, ,
\end{align}
\begin{align}\label{e:Rc3}
\overline{\Ric}_{ii}\geq r^{2}h^{2}[\frac{(n-2)\epsilon}{r^{2}h^{2}} + \frac{E}{2(\log (-\log (r_0r)))^{2}(-\log (r_0r))r^{2}} - \frac{DF}{\log (-\log (r_0r))(-\log (r_0r))r^{2}}\notag\\
-  \frac{DF^{2}}{(\log (-\log (r_0r)))^{2}(-\log (r_0r))^{2}r^{2}}]\geq r^{2}h^{2}\frac{E}{\log (-\log (r_0r))r^{2}}\, ,
\end{align}
where the last inequalities on (\ref{e:Rc1}) and (\ref{e:Rc3}) require $E\geq E(n,F)$ and $r_0$ sufficiently small.  Now it is clear from the above that we get positive Ricci in the $r$ and $M$ directions.  The difficulty is that we have a mixed term (\ref{e:Rc2}) which can certainly be negative and in fact dominates the positivity of (\ref{e:Rc1}). To see positivity fix a point $(r,x)\in (0,1)\times M$ and assume at this point $g_{ij}(f(r))=\delta_{ij}$.  Then every unit direction at this point is of the form $\delta \hat{r}+\frac{\sqrt{1-\delta^{2}}}{rh}\hat{i}$ for $\delta\in [0,1]$ and we can compute:
\begin{align}
 \overline{\Ric}_{(\delta r+\frac{\sqrt{1-\delta^{2}}}{rh}i)(\delta r+\frac{\sqrt{1-\delta^{2}}}{rh}i)}
 \geq \frac{1}{\log (-\log (r_0r))r^{2}}[\frac{E\delta^{2}}{2\log (-\log (r_0r))(-\log (r_0r))}\notag\\
- \frac{2DF\delta\sqrt{1-\delta^{2}}}{(-\log (r_0r))h} + E(1-\delta^{2})]
\end{align}
\begin{align}
\geq \frac{1}{2\log (-\log (r_0r))r^{2}}[\frac{E\delta^{2}}{\log (-\log (r_0r))(-\log(r_0r))} - \frac{DF\delta\sqrt{1-\delta^{2}}}{(-\log(r_0r))} + E(1-\delta^{2})\, ,
\end{align}
where the last inequality is for $r\leq 1$ and after possibly changing $D$.  To see this is positive for any $\delta\in [0,1]$ we break it into two cases, when $\sqrt{1-\delta^{2}}\geq \frac{1}{(-\log (r_0r))}$ and $\sqrt{1-\delta^{2}}\leq \frac{1}{(-\log (r_0r))}$.  For the first case we see that
\begin{align}
 \overline{\Ric}_{(\delta r+\frac{\sqrt{1-\delta^{2}}}{rh}i)(\delta r+\frac{\sqrt{1-\delta^{2}}}{rh}i)} \geq \frac{\sqrt{1-\delta^{2}}}{\log (-\log (r_0r))r^{2}}[\frac{-DF}{(-\log(r_0r))}+\frac{E}{(-\log(r_0r))}] \geq 0\, ,
\end{align}
for $E\geq DF$.  For the case $\sqrt{1-\delta^{2}}\leq \frac{1}{(-\log(r_0r))}$ we first note that $\delta\geq \frac{1}{2}$ for $r\leq 1$ and then group the first two terms to get:
\begin{align}
 \overline{\Ric}_{(\delta r+\frac{\sqrt{1-\delta^{2}}}{rh}i)(\delta r+\frac{\sqrt{1-\delta^{2}}}{rh}i)} \geq \frac{\delta}{\log (-\log (r_0r))(-\log (r_0r))r^{2}}[\frac{E}{2\log (-\log (r_0r))}-\frac{DF}{(-\log (r_0r))}] \geq 0
\end{align}
for $E\geq DF$ and $r\leq 1$, and $r_0$ sufficiently small as claimed.

Now extending $f$ and $h$ to the rest of $r$ can be done in the same manner, and handling the case when $g(s)=g(T)$ stabilizes is comparatively simple and can be done with a cutoff function so that $h(r)$ is concave in this region.  Note for any $h_\infty$ we can pick $F$, and hence $E$, sufficiently small as to make the volume loss as small as we wish.
\end{proof}

With the above in hand it is easy to finish Theorem \ref{t:Smooth_Section_Existence}.

\begin{proof}[Proof of Theorem \ref{t:Smooth_Section_Existence}]
We begin by constructing what will be the limit space $Y=C(X)$ of the theorem.  Let $c:(-\infty,\infty)\rightarrow\Omega$ be a smooth map such that for every open neighborhood $U\subseteq \Omega$ there are $t_a\rightarrow\infty$ such that $c(-t_a)=c(t_a)\in U$.  

In the case when condition (\ref{con:Volume_Constant}) is assumed we can apply a theorem of Moser \cite{Mo}, which tells us that for a compact manifold $X$ if $w_0,w_1$ are volume forms with the same volume then there exists a diffeomorphism $\phi:X\rightarrow X$ such that $w_1=\phi^*w_0$.  With this in mind there is no loss in assuming that for each $s,t\in (-\infty,\infty)$ we have $dv_{g(c(s))}=dv_{g(c(t))}$, since the other conditions of the theorem are diffeomorphism invariant.

Because $g(x)$ is smooth for $x\in \Omega$ we can be sure, after possibly reparametrizing $c$, that $g(t)\equiv g(c(t))$ satisfies Lemma \ref{l:ExamTech}.  We take
$$\bar{g}=dr^{2}+r^{2}h^{2}(r)g(f(r))$$
from this lemma.  The conditions on $h$ guarantee that the metric extends to a complete metric on the cone $Y$.

Now we argue that $Y$ satisfies the conditions of the theorem, hence for each $s\in \bar\Omega$ that the metric cone $C(X_s)$ is realized as a tangent cone of $Y$.  So let $r_a\rightarrow 0$ such that $c(f(r_a))\rightarrow s$, which we can do by the conditions on $f$ and the construction of $c$.  If we consider the rescaled metric
$$r^{-2}_a\bar{g}\approx dr^{2}+r^{2}h^{2}(r_ar)g(f(r_a r))\, ,$$
then by the condition $\lim_{r\rightarrow 0} rf'(r)=0$ we see that this converges to the desired tangent cone as claimed.

Finally, we wish to show that if for some $s_0\in \Omega$ that if $X_{s_0}$ is Ricci closable, then $(Y,d)$ can be realized as a limit $(M_\alpha,g_\alpha,p_\alpha)$ of Riemannian manifolds with nonnegative Ricci curvature.  For each $\alpha$ let $c_\alpha(t)$ be a smooth curve such that
\begin{align}
c_\alpha(t) = \left\{ \begin{array}{rl}
 c(t)&\mbox{ if } t\geq -\alpha\notag\\
  s_0 &\mbox{ if } t\leq -2\alpha
       \end{array} \right. \, .
\end{align}

For each $\alpha$ let $(C(X),d_\alpha)$ be the metric space associated with the curve
$$
g_\alpha(t)\equiv (1-\alpha^{-1})g(c_\alpha(t))\, ,
$$
as by Lemma \ref{l:ExamTech} (again, if need be we can reparametrize $c_\alpha(t)$ for $t<-\alpha$ to force $g_\alpha(t)$ to satisfy the requirements of the Lemma).  Near the cone point we have that $(C(X),d_\alpha)$ is isometric to $C(X,(1-\frac{1}{\alpha})g(s_0))$.  By the assumption of Ricci closability there exists a complete Riemannian manifold $(N_\alpha,h_\alpha,p_\alpha)$ such that
$$
\Ric(N_\alpha)\geq 0\, ,
$$
and
$$
A_{1,\infty}(p_\alpha)\equiv A_{1,\infty}(C(M,(1-\alpha^{-i})g(s_0)))\, .
$$
Thus we can glue these together to construct smooth Riemannian manifolds $(M_\alpha,g_\alpha,p_\alpha)$.  This is our desired sequence.
\end{proof}

\section{Example I}\label{ss:Example1}
Our first application of Theorem \ref{t:Smooth_Section_Existence} is to provide, for $n\geq 3$, examples of limit spaces
\begin{align}
 (M^n_\alpha,g_\alpha,p_\alpha)\stackrel{GH}{\rightarrow} (Y^n,d_Y,p)\, ,
\end{align}
where each $M_\alpha$ has nonnegative Ricci curvature with $\Vol(B_1(p_\alpha))>v>0$, and such that at $p\in Y$ the tangent cones are not only nonunique, but for each $0\leq k\leq n-2$ we can find a sequence $r^k_a\rightarrow 0$ such that
\begin{align}
(Y,(r^k_a)^{-1}d_Y,p)\stackrel{GH}{\rightarrow} \RR^k\times C(X^{n-k-1})\, ,
\end{align}
where the $X^{n-k-1}$ are smooth manifolds with $\Vol(X^{n-k-1})<\Vol(\Sn^{n-k-1})$.  That is, for each $0\leq k\leq n-2$ we can find a tangent cone which splits off precisely an $\RR^k$ factor.  As was remarked earlier this is optimal, in that if any tangent cone were to split a $\RR^{n-1}$-factor, then by \cite{ChC2} we would have that $p$ is actually a regular point of $Y$, and in particular by \cite{C} every tangent cone would be $\RR^n$.

To construct our example we will build a family of smooth manifolds $(S^{n-1},\bar g_s)$ , and apply Theorem \ref{t:Smooth_Section_Existence}.  To describe this family let us first define for $0<t\leq 1$ the $t$-suspension, $S_t(X)$, over a smooth manifold $X$.  That is, for $0<t\leq 1$ and a smooth manifold $X$, the metric space $S_t(X)$ is homemorphic to the suspension over $X$ and its geometry is defined by the metric
$$
dr^2+\sin^2(\frac{1}{t} r)\,d_X^2\, ,
$$
for $r\in (0,t\pi)$.  Notice then that $S_1(X)$ is the standard metric suspension of $X$.  Now for any $\vec t \in \cD\equiv \{\vec t\in \RR^{n-1}: 0<t_{n-1}\leq t_{n-2}\leq\ldots\leq t_{1}\leq 1\}$ we can define the metric
$$
g_{\vec t}\equiv S_{t_{1}}(\ldots S_{t_{n-2}}(\Sn^1(t_{n-1})))\, ,
$$
where $\Sn^1(t_{n-1})$ is the circle of radius $t_{n-1}$.  Note in particular that $g_{(t,\ldots,t)}$ is the $n-1$ sphere of radius $t$.  More generally, we have that $g_{(1,\ldots,1,t,\ldots,t)}$, where the first $k$ entries are $1$, is isometric to the $k$-fold suspension of the $n-k-1$ sphere of radius $t$.  This tells us in particular that
$$
C((\Sn^{n-1},g_{(1,\ldots,1,t,\ldots,t)}))\equiv \RR^k\times C(\Sn^{n-k-1}(t))\, .
$$

 Let us define the subset $\Omega\subseteq \RR^{n-1}$ by the condition 
$$
\Omega\equiv \{\vec t\in\RR^{n-1}:0<t_{n-1}\leq t_{n-2}\leq\ldots\leq t_{1}< 1\text{ and } \Vol(g_{\vec t})=\Vol(g_{\frac{1}{2},\ldots,\frac{1}{2}})\}\, .
$$
We have that $\Omega$ satisfies the following basic properties:
\begin{enumerate}
 \item $\Omega$ is a smooth, connected, open submanifold of dimension $n-2$.
 \item $(\frac{1}{2},\ldots,\frac{1}{2})\in\Omega$.
 \item For each $0\leq k\leq n-2$ $\exists$ $0<t_k<1$ and $\vec t_i\in\Omega\to (1,\ldots,1,t_k,\ldots,t_k)$ such that $(\Sn^{n-1},g_{\vec t_i})\stackrel{GH}{\rightarrow} (\Sn^{n-1},g_{(1,\ldots,1,t_k,\ldots,t_k)})$, where the first $k$ entries are $1$.
\end{enumerate}

Now the collection $g_s$ with $s\in \Omega$ almost defines our family.  Notice in particular that since $g_{(\frac{1}{2},\ldots,\frac{1}{2})}$ is the $n-1$ sphere of radius $\frac{1}{2}$ it is certainly Ricci closable, and that for every $0\leq k\leq n-2$ we have by the third condition above that $\RR^k\times C(\Sn^{n-k-1}(t_k))\in \overline{g(\Omega)}$, where the closure is in the Gromov-Hausdorff sense.  The remaining issue is simply that our metrics $g_s$ on $\Sn^{n-1}$ are not smooth.  However, for $\vec t\in\Omega$ they do satisfy 
$$\sec[g_{\vec t}]> 1+\epsilon(\vec t)\, ,$$
both on the smooth part and in the Alexandrov sense on the whole, where $\epsilon(\vec t)\to 0$ as $\vec t\to\partial\Omega$.  Although not smooth, the singularities are isometric spheres and may be easily smoothed in a canonical fashion by writing in normal coordinates with respect to the singular spheres, see \cite{P1}, \cite{M1}, \cite{M3} for instance.  We let $\bar g_{\vec t}$ be such a smoothing, where for each $\vec t$ we can then easily arrange, by smoothing a sufficiently small amount, that 
\begin{align}
\sec[\bar g_{\vec t}]>1+\frac{1}{2}\epsilon(\vec t)\,
\end{align}
while 
\begin{align}
 |\Vol(\bar g_{\vec t})-\Vol(g_{\vec t})|<\delta(\vec t)\, ,
\end{align}
where $\delta(\vec t)<<\epsilon(\vec t)$.  Thus, after a slight rescaling of each $\bar g_{\vec t}$, we can guarantee that the volumes continue to coincide and that $\sec_{\vec t}\geq 1$ for $s\in\Omega$.  This family thus satisfies Theorem \ref{t:Smooth_Section_Existence}, and we can construct the desired limit space $(M^n_\alpha,g_\alpha,p_\alpha)\rightarrow (Y^n,d_Y,p)$ as in the Theorem.

\section{Example II}\label{ss:Example3}

In this section we present one further example of interest.  We wish to construct a complete limit space
\begin{align}
(M^{5}_{\alpha},g_\alpha,p_\alpha)\rightarrow (Y^5,d_Y,p)\, ,
\end{align}
where each $M_\alpha$ satisfy $\Ric_\alpha\geq 0$, $\Vol(B_1(p_\alpha))\geq v>0$, and such that at $p$ the tangent cones of $Y$ are not only not unique, but there exist distinct tangent cones which are not even homeomorphic.  Specifically there are sequences $r_a\rightarrow 0$ and $r'_a\rightarrow 0$ with
\begin{align}
(Y,r_a^{-1}d_Y,p)\rightarrow (C(X_p),d_{Y_p},p)\, ,\notag\\
(Y,r_a'^{-1}d_Y,p)\rightarrow (C(X_p'),d_{Y_p'},p)\, ,
\end{align}
and such that homeomorphically we have
\begin{align}
X_p\approx \mathds{C}P^2\sharp \overline{\mathds{C}P}^2\, ,\notag\\
X_p'\approx \Sn^4\, .
\end{align}

To construct our example we wish to again use Theorem \ref{t:Smooth_Section_Existence}.  We will construct a family of metrics $(\mathds{C}P^2\sharp \mathds{C}P^2,g_t)$ with $t\in (0,2]$ which satisfy the hypothesis of the theorem and such that
$$\lim_{t\rightarrow 0}\,(\mathds{C}P^2\sharp \overline{\CC P}^2,g_t) = (\Sn^4,g_0)\, .$$
Geometrically, $(\Sn^4,g_0)$ will contain two singular points and will look roughly like a football.  On the other hand, $(\mathds{C}P^2\sharp \mathds{C}P^2,g_2)$ will have a sufficiently nice form that we will be able to show that it is Ricci closable.  Once this family is constructed we can immediately apply Theorem \ref{t:Smooth_Section_Existence} to produce our example.

The construction of the family will be done in several steps.  We begin by introducing our basic ansatz.  Let $\Sn^3$ be the three sphere, viewed as the Lie Group $SU(2)$, with the standard frame $X$, $Y$, $Z$ such that
$$
[X,Y]=2Z\,, \,[Y,Z]=2X,\, [Z,X]=2Y\, .
$$

Each piece of the various constructions will be a metric on $(r_0,r_1)\times \Sn^3$ which takes the form
\begin{align}
 dr^2 + A(r)^2dX^2 + B^2(r)\left(dY^2+dZ^2\right)\, ,
\end{align}
where $0\leq r_0<r_1\leq \frac{\pi}{2}$.  Notice that by employing various boundary data on $A$ and $B$ we can get these metrics to close up to smooth metrics on $\mathds{C}P^2$, $\mathds{C}P^2\sharp \overline{\mathds{C}P}^2$ or $\mathds{C}P^2\setminus \overline D^4$, where $\overline D^4$ is the closed $4$-ball.  The Ricci curvature of these metrics satisfy the equations
\begin{align}\label{e:ricci}
&\Ric(r,r) = -\frac{A''}{A}-2\frac{B''}{B}\, ,\\
&\frac{1}{|X|^2}\Ric(X,X) = -\frac{A''}{A}-2\frac{A'B'}{AB}+2\frac{A^2}{B^4}\, , \\
&\frac{1}{|Y|^2}\Ric(Y,Y) = -\frac{B''}{B}-\frac{A'B'}{AB}-\left(\frac{B'}{B}\right)^2+2\frac{2B^2-A^2}{B^4}\, , \\
&\frac{1}{|Z|^2}\Ric(Z,Z) = -\frac{B''}{B}-\frac{A'B'}{AB}-\left(\frac{B'}{B}\right)^2+2\frac{2B^2-A^2}{B^4}\, , 
\end{align}
with all other Ricci terms vanishing.

\subsection{Bubble Construction}

Our bubbles mimic those of \cite{P1}, see also \cite{M1}, \cite{M3}.  Let $0<b_0\leq 1$ be a constant which will be fixed at the end of the construction.  For each $0\leq \epsilon\leq 1$ let us consider the metric spaces $\cB^\epsilon$ defined by
\begin{align}
&A^\epsilon_\cB(r) \equiv b_0\frac{1}{2}\sin(2r)\, , \\
&B^\epsilon_\cB(r) \equiv b_0\left(\frac{1}{100}-(\frac{1}{2}-\frac{1}{100})\epsilon\right)\cosh(\frac{\epsilon}{100} r)\, ,
\end{align}
for $r\in (0,r_\epsilon]$, where $r_\epsilon$ is such that $A^\epsilon_\cB(r_\epsilon)=B^\epsilon_\cB(r_\epsilon)$.  Our bubbles $\cB^\epsilon$ are smooth manifolds with boundary which are homeomorphic to $\mathds{C}P^2\setminus\overline D^4$.  Notice that $0<r_1\leq r_\epsilon\leq r_0\equiv \frac{\pi}{4}$, and that for each such $\epsilon$ the boundary $\partial\cB_\epsilon$ is an isometric sphere of radius between $\frac{b_0}{100}$ and $\frac{b_0}{2}$.  The second fundamental forms of each boundary, $T(\partial\cB^\epsilon)$, are uniformly positive and satisfy the estimate
\begin{align}\label{e:sff}
 T(\partial\cB_\epsilon)>\lambda_\epsilon b_0\, ,
\end{align}
where $\lambda_\epsilon\to 0$ as $\epsilon\to 0$.  Further, the boundary $\partial\cB^0$ has zero second fundamental form, and two copies of $\cB^0$ may be glued to contruct a smooth metric on $\CC P^2\sharp\overline{\CC P}^2$.  Note for all $b_0$ sufficiently small, that by (\ref{e:ricci}) the Ricci curvatures of each of these spaces are uniformly positive independent of $\epsilon\in [0,1]$.

\vskip3mm
{\bf Step 1:}
\vskip3mm
Here we construct the metrics $(\CC P^2\sharp\overline{\CC P}^2,g_t)$ for $t\in (0,1]$.  The metrics will have the claimed property that as $t\to 0$, $(\CC P^2\sharp\overline{\CC P}^2,g_t)\to (\Sn^4,g_0)$.  We will show simply that the metrics satisfy 
$$\Vol_t>\eta>0\, $$
$$\Ric_t>\eta>0\, ,$$ 
independent of $t$.  It then holds that conditions (\ref{con:Volume_Constant}) and (\ref{con:Positive_Ricci}) can be forced after appropriate rescalings. 

For each $\ell>0$ we first consider the football metrics $\cF_\ell$ defined by
\begin{align}
&A^\ell_\cF(r) \equiv \frac{1}{2}\ell\sin(2r)\, , \\
&B^\ell_\cF(r) \equiv \frac{1}{2}\ell\sin(2r)\, ,
\end{align}
for $r\in (0,\frac{\pi}{2})$.  By definition we let $\cF_\ell(s)$ be the smooth manifold with boundary, homeomorphic to $[0,1]\times \Sn^3$, gotten by the restriction $r\in [s,\frac{\pi}{2}-s]$.  For all $\delta>0$ we can pick $\ell\leq \bar\ell(\delta)$ such that for all $0<s< \frac{\pi}{4}$ the boundary of $\cF_\ell(s)$ is a sphere of radius $\rho(s)$ and has a second fundamental form which satisfies $T(\partial\cF_\ell(s))>-\delta\rho(s)$.

Let us fix $\delta<<\lambda_{1}$, where $\lambda_1$ is as in (\ref{e:sff}), and correspondingly let $\ell\leq \bar\ell(\delta)$.  For all $0< t\leq 1$ let $g_t$ be the smooth metric on $\mathds{C}P^2\sharp \overline{\mathds{C}P}^2$ gotten by gluing $\cF^{\bar\ell}(t \frac{\pi}{4})$ with $\cB^1$ and then smoothing.  As in \cite{P1}, the constraints on the second fundamental forms guarantee that this smoothing can be done so that it preserve the positive Ricci curvature.  Because the smoothing is done with respect to normal coordinates on the boundary, see \cite{P1}, it is clear that this can be done smoothly in $t$, and that the Ricci curvature is uniformly positive independent of $0<t\leq 1$.  This follows because it holds for $\cF^{\ell}$, and near the bubble $\cB^1$ we have that $\Ric\to \infty$ as $t\to 0$.  Notice that the metric $(\mathds{C}P^2\sharp \overline{\mathds{C}P}^2,g_1)$ is now just a smoothing of two copies of $\cB^1$ glued along their boundaries.

\vskip3mm
{\bf Step 2:}
\vskip3mm

Here we construct the metrics $(\CC P^2\sharp\overline{\CC P}^2,g_t)$ for $t\in [1,2]$.  We will see later that the metric $(\CC P^2\sharp\overline{\CC P}^2,g_2)$ is Ricci closable.  Again, we will only worry about seeing that there exists uniform positive lower bounds on the volume and Ricci curvature.

Let us now consider the family of metrics $(\CC P^2\sharp\overline{\CC P}^2,g_t)$, $t\in [1,2]$, defined by gluing two copies of $\cB^{2-t}$ along the boundaries and smoothing.  Again, it follows from the conditions on the second fundamental forms and \cite{P1} that these metrics themselves have uniformly positive Ricci curvature.  Further, as we previously observed the metric space $(\CC P^2\sharp\overline{\CC P}^2,g_2)$ requires no smoothing, and with only a little care we see that the smoothing process can be done smoothly in $t$.

\subsection{Closability} 

Now that we have constructed the $1$-parameter family of metrics $(\CC P^2\sharp \overline{\CC P}^2,g_t)$ with $t\in (0,2]$, we need to show that at least one of these metrics is Ricci closable, see Definition \ref{d:closeable}.  A clear necessary condition for this is that the manifold in question be trivially cobordent, hence our choice of $\CC P^2\sharp \overline{\CC P}^2$.  We will focus on the space $(\CC P^2\sharp\overline{\CC P}^2,g_2)$, whose geometry is explicitly described by the conditions
\begin{align}\label{e:B0}
&A^0_\cB(r) \equiv \frac{b_0}{2}\sin(2r)\, , \\
&B^0_\cB(r) \equiv \frac{1}{2}b_0\, ,
\end{align}
with $r\in (0,\frac{\pi}{2})$.  We have viewed $\CC P^2\sharp \overline{\CC P}^2$ as the warped product $(0,\frac{\pi}{2})\times S^3$, where at the boundary ends the Hopf fiber collapses to glue in two $S^2$'s.  It will now be more convenient to visualize $\CC P^2\sharp \overline{\CC P}^2$ as the nontrivial $S^2$ bundle over $S^2$.  Topologically, the $5$-manifold which then realizes the trivial cobordism of $\CC P^2\sharp \overline{\CC P}^2$ can be viewed as a nontrivial $\bar D^3$ bundle over $S^2$, where $\bar D^3$ is the closed $3$-ball.  The geometric cobordism we will build on this space, which will satisfy Definition \ref{d:closeable}, will be built in two pieces.  These pieces will themselves then be glued together.  Our ansatz for the metric construction on each piece will look similar to before, though a little more complicated.  We consider metrics of the following form:
\begin{align}
 ds^2+C^2(s)dr^2+D^2(s)A^2(r)dX^2+E^2(s)B^2(r)\left(dY^2+dZ^2\right)\, ,
\end{align}
where $s\in (s_0,s_1)$, $r\in(0,\frac{\pi}{2})$, and $X,Y,Z$ are the standard left invariant vector fields on $S^3$ as before.  The Ricci curvature on such spaces takes the form
\begin{align}\label{e:Riccicomputations3}
&\Ric(s,s) = -\frac{\ddot C}{C}-\frac{\ddot D}{D}-2\frac{\ddot E}{E}\, ,\\
&\Ric(s,r) = \frac{\dot C}{C}\left(\frac{A'}{A}+2\frac{B'}{B}\right)-\frac{\dot D}{D}\frac{A'}{A}-2\frac{\dot E}{E}\frac{B'}{B}\, ,\\
&\frac{1}{|r|^2}\Ric(r,r) = -\frac{\ddot C}{C}-\frac{\dot C}{C}\left(\frac{\dot D}{D}+2\frac{\dot E}{E}\right)-C^{-2}\left(\frac{A''}{A}+2\frac{B''}{B}\right)\, ,\\
&\frac{1}{|X|^2}\Ric(X,X) = -\frac{\ddot D}{D}-C^{-2}\frac{A''}{A}-\frac{\dot D}{D}\left(\frac{\dot C}{C}+2\frac{\dot E}{E}\right)-2C^{-2}\frac{A'B'}{AB}+2\frac{D^2A^2}{B^4E^4}\, , \\
&\frac{1}{|Y|^2}\Ric(Y,Y) = -\frac{\ddot E}{E}-C^{-2}\frac{B''}{B}-\frac{\dot E}{E}\left(\frac{\dot C}{C}+\frac{\dot D}{D}+\frac{\dot E}{E}\right)-C^{-2}\frac{B'}{B}\left(\frac{A'}{A}+\frac{B'}{B}\right)+2\frac{2B^2E^2-A^2D^2}{B^4E^4}\, ,\\
&\frac{1}{|Z|^2}\Ric(Z,Z) = -\frac{\ddot E}{E}-C^{-2}\frac{B''}{B}-\frac{\dot E}{E}\left(\frac{\dot C}{C}+\frac{\dot D}{D}+\frac{\dot E}{E}\right)-C^{-2}\frac{B'}{B}\left(\frac{A'}{A}+\frac{B'}{B}\right)+2\frac{2B^2E^2-A^2D^2}{B^4E^4}\, ,
\end{align}
where all other Ricci terms vanish.

Our first piece of the geometric cobordism, which is a metric space we will denote by $\cC^1$, will be defined by the functions
\begin{align}
&A_2(r) \equiv \frac{b_0}{2}\sin(2r)\, , \\
&B_2(r) \equiv \frac{b_0}{2}\, ,\\
&C_2(s)=D_2(s)=E_2(s) \equiv s\, ,
\end{align}
with $s\in [1,\infty)$.  That is, $\cC^1$ is simply the top half of the cone over $(\CC P^2\sharp \overline{\CC P}^2,g_2)$.  To smooth this out near the cone point we consider the metric space $\cC^2$ defined by
\begin{align}
&A_3(r) \equiv \frac{b_1}{2}\sin(2r)\, , \\
&B_3(r) \equiv \frac{b_1}{2}\, , \\
&C_3(s)=D_3(s)\equiv \sin(2s)\, , \\
&E_3(s)\equiv e_0\cosh(e_0 s)\, ,
\end{align}
with $s\in (0,s_0)$, where $s_0$ defined by the condition $C_3(s_0)=E_3(s_0)$.  A computation using (\ref{e:Riccicomputations3}) tells us that for each $e_0$ sufficiently small that for $b_1$ sufficiently small we have $s_0>0$, and that the underlying space having strictly positive Ricci curvature.  Further, in analogy with the construction of $\cB^\epsilon$, we have that the boundary $\partial\cC^2$ has strictly positive second fundamental form,
$$
T(\partial\cC)>\lambda>0\, .
$$

The argument now mimicks that of {\bf Step 1}.  If we fix $b_0$ sufficiently small in comparison to $\lambda$, then the second fundamental form of the boundary of $\partial \cC^2$ is more positive than the second fundamental form of $\partial\cC^1$ is negative.  Thus, by using \cite{P1} once again and rescaling $\cC^1$ appropriately, we may glue $\cC^1$ with $\cC^2$ so that after smoothing we have a manifold with nonnegative Ricci curvature.  With $b_0$ chosen appropriate this then shows that $(\CC P^2\sharp \overline{\CC P}^2,g_2)$ is Ricci closable as claimed, and thus finishes the construction.


\begin{thebibliography}{A}

\bibitem[ChC1]{ChC1}
J. Cheeger and T.H. Colding,
Lower bounds on Ricci curvature and the
almost rigidity of warped products.  Ann. of Math. (2)  144  (1996),
no. 1, 189--237.

\bibitem[ChC2]{ChC2}
J. Cheeger and T.H. Colding,
On the structure of spaces with Ricci curvature
bounded below. I.  J. Differential Geom.  46  (1997),  no. 3, 406--480.

\bibitem[C]{C}
T.H. Colding,
Ricci curvature and volume convergence.
Ann. of Math. (2)  145  (1997),  no. 3, 477--501.

\bibitem[CN1]{CN1}
T.H. Colding and A. Naber, 
Sharp H\"older continuity of tangent cones for spaces with a lower Ricci curvature bound and applications, 
preprint, http://arxiv.org/abs/1102.5003.

\bibitem[CN2]{CN2}
T.H. Colding and A. Naber, 
Lower Ricci Curvature, Branching, and Bi-Lipschitz Structure of Uniform Reifenberg Spaces, preprint.

\bibitem[G]{G}
M. Gromov,
Metric structures for Riemannian and non-Riemannian spaces.
With appendices by M. Katz, P. Pansu and S. Semmes.  Birkh\"auser Boston, Inc., Boston, MA, 2007.

\bibitem[GLP]{GLP}
M. Gromov, J. Lafontaine, and P. Pansu,
Structures metriques pour les varieties riemanniennces.
Paris: Cedid/Fernand Nathan, 1981.

\bibitem[LV]{LV}
 J. Lott and C. Villani,
 Ricci curvature for metric-measure spaces via optimal transport,
 Ann. of Math. (2) 169 (2009) No. 3, 903--991.

\bibitem[M1]{M1}
X. Menguy,
Noncollapsing examples with positive Ricci curvature and infinite
topological type.
GAFA 10 (2000), no. 3, 600--627.

\bibitem[M2]{M2}
X. Menguy,
Examples of strictly weakly regular points.
GAFA
11 (2001), no. 1, 124--131.

\bibitem[M3]{M3}
X. Menguy,
Examples of manifolds and spaces with positive Ricci curvature,
Ph.D. thesis, Courant Institute, New York University 2000.

\bibitem[M4]{M4}
X. Menguy,
Examples of nonpolar limit spaces,
Amer. J. Math. 122 (2000), no. 5, 927--937.

\bibitem[Mo]{Mo}
J. Moser,
On the volume elements on a manifold, 
Trans. Amer. Math. Soc. 120 (1965), 286-294.

\bibitem[P1]{P1}
G. Perelman,
Construction of manifolds of positive Ricci curvature with big volume and
large Betti numbers. (English summary) Comparison geometry (Berkeley, CA,
1993--94), 157--163,
Math. Sci. Res. Inst. Publ., 30, Cambridge Univ. Press, Cambridge, 1997.

\bibitem[P2]{P2}
G. Perelman,
A complete Riemannian manifold of positive
Ricci curvature with Euclidean volume growth and nonunique asymptotic
cone.  Comparison geometry (Berkeley, CA, 1993--94),  165--166, Math.
Sci. Res. Inst. Publ., 30, Cambridge Univ. Press, Cambridge, 1997.

\bibitem[P3]{P3} 
G. Perelman, 
Alexandrov spaces with curvatures bounded from below II. 
preprint, 1991.

\bibitem[S]{S}
K.T. Sturm,
On the geometry of metric measure spaces; I,  
Acta Math. 196, 1 (2006), 65--131.

\end{thebibliography}
\end{document}